\documentclass[12pt,reqno]{amsart}
\usepackage[margin=1in]{geometry}
\makeatletter
\def\section{\@startsection{section}{1}%
	\z@{.7\linespacing\@plus\linespacing}{.5\linespacing}%
	{\bfseries\normalfont\scshape
		\centering
}}
\def\@secnumfont{\bfseries}
\makeatother
\usepackage{hyperref}
\usepackage{amsmath,amsfonts,amsthm,txfonts}
\usepackage{dsfont}
\usepackage{enumerate}
\usepackage{upgreek} 
\usepackage{xcolor}
\usepackage{cases}
\newtheorem{theorem}{Theorem}[section]

\newtheorem{lemma}[theorem]{Lemma}
\newtheorem{proposition}{Proposition}

\theoremstyle{definition}
\newtheorem{definition}[theorem]{Definition}
\newtheorem{remark}{Remark}
\newtheorem{Ass}{Assumption}

\begin{document}
	\title[Identification of thermal expansion coefficient]{Identification of thermal expansion coefficient in \vspace{.05in}\\ a thermoelastic plate from final time measured \vspace{.05in}\\ displacement} 
	\author{ A. Dileep}\address{Department of Mathematical Sciences, Hanbat National University, Daejeon, Republic of Korea, 34158.}
    \email{dileepanjuna10@gmail.com}
	\author{S. Patnaik}
    \address{Johann Radon Institute for Computational and Applied Mathematics (RICAM), Austrian Academy of Sciences, Altenberger Strasse 69, 4040 Linz, Austria}
    \email{sidhartha.patnaik@ricam.oeaw.ac.at}
  	\author{K. Sakthivel}
    \address{Department of Mathematics, Indian Institute of Space Science and Technology, Trivandrum- 695 547, INDIA.}
	\email{sakthivel@iist.ac.in}
    \author{A. Hasanov}
    \address{Department of Mathematics, Kocaeli university, 41001, Kocaeli, Turkey.}
    \email{alemdar.hasanoglu@gmail.com}
	\begin{abstract}
		  We investigate a coupled thermoelastic plate system consisting of a fourth-order displacement equation and a heat evolution  equation linked through a spatially varying coupling factor $\alpha(x)$. The model accounts for thermoelastic interactions through the operators $\operatorname{div}(\alpha(x)\nabla \theta)$ and $\operatorname{div}(\alpha(x)\nabla u_t)$. We establish the well-posedness of the direct problem under homogeneous Neumann conditions for $u$ and Dirichlet conditions for $\theta$, deriving optimal energy estimates and demonstrating continuous dependence of solutions on the given data.
		We further introduce an input-output operator corresponding to the considered inverse problem and show that it is compact and Lipschitz continuous, confirming the ill-posed nature of the associated inverse problem. Using these properties, the inverse problem is formulated as a minimization problem for the Tikhonov functional, and we establish the existence of a minimizer.  
	\end{abstract}
	
	\subjclass{35R30, 74F05, 35B65.}
	\keywords{Kirchhoff-Love plate, heat equation, thermoelastic plate equation, inverse coefficient problem, final time output.}
	
	\maketitle
	
		\section{Introduction}
	In this paper, the inverse problem of determining the unknown thermal expansion coefficient in an anisotropic thermoelastic plate is studied. The vibration of the plate is governed by the Kirchhoff-Love plate equation, and the temperature distribution is described by the heat equation based on Fourier’s law of heat conduction. In this coupled system, temperature gradients generate internal stresses while deformation rates feed back into the thermal field, producing a bidirectional coupling that strongly influences structural behavior. In engineering applications, structural components are often subjected to non-uniform thermal gradients that vary across different directions due to material anisotropy. This nonuniformity can lead to deformation, changes in geometry from thermal expansion, and a reduction in the material strength and stiffness. To accurately capture the plate’s response under the above conditions, the study of thermoelasticity in anisotropic material is essential.
	
	The thermoelastic behavior of anisotropic plates is fundamental in applications such as aerospace panels, MEMS devices, and composite structural components, where thermal loads and mechanical vibrations interact in a nontrivial manner. In these applications it is crucial for understanding how thermal and mechanical interactions influence the dynamic behavior and stability of plates under external loads (see, \cite{andrew2018}, \cite{bhullar}, \cite{timoshenko}).
	
	In this study, we consider a thermoelastic plate whose mechanical and thermal responses are governed by a fully coupled system of equations that captures both the anisotropic material behavior and the spatial variation of the thermoelastic coupling coefficient. This modeling framework enables us to account for direction-dependent thermal conduction, non-uniform mechanical stiffness, and the bidirectional influence between deformation and temperature evolution. In particular, the unknown coefficient $\alpha(x)$ enters directly into the coupling terms, controlling the strength with which temperature gradients induce bending forces and the rate of deformation contributes to heat generation. 
	
	If we consider a bounded domain $\Omega \subset \mathbb{R}^2$ with smooth boundary $\Gamma$, and define  $\Omega_T:=\Omega \times (0,T]$ along with $\Gamma_T:=\Gamma \times [0,T]$, then the evolution of the plate's transverse displacement $u(x, t)$, and temperature deviation $\theta(x, t)$ is governed by the following system of coupled non-homogeneous, thermally anisotropic \emph{thermoelastic plate} equations (see, \cite{meller2001}, \cite{JElagnese} and \cite{J.E. Lagnese}):	
	\begin{subnumcases}{\label{1aa}}
		u_{tt} - r\,\Delta u_{tt} + \Delta^{2}u + \operatorname{div}(\alpha(x)\nabla\theta)
		= F(x,t), \quad\qquad   (x,t)\in \Omega_T, \label{1a} \\[4pt]
		\theta_{t} - \Delta\theta - \operatorname{div}(\alpha(x)\nabla u_{t})
		= G(x,t), \qquad \qquad \quad\ \quad(x,t)\in \Omega_T, \label{1b}  \\[0pt]
		u = \frac{\partial u}{\partial\nu} = 0,\qquad 
		\theta = 0, \qquad\qquad\qquad\,\qquad\qquad (x,t)\in \Gamma_T, \label{1c} \\[2pt]
		u(x,0) = u_{0},\quad 
		u_{t}(x,0) = v_{0},\quad 
		\theta(x,0) = \theta_{0}, \qquad\quad\, x\in\Omega. \label{1d}
	\end{subnumcases}
	We now define the physical significance of the parameters and functions appearing in the coupled system \eqref{1aa}. The term $-r\Delta u_{tt}$ in the mechanical equation accounts for the rotational inertia of the plate filaments, with the constant $r > 0$ being proportional to the square of the plate thickness. The function $\alpha(x)$ is the spatially varying thermal expansion coefficient, which acts as the coupling mechanism. This coupling is two-way: the term $\operatorname{div}(\alpha(x)\nabla \theta)$ represents the bending force induced by the thermal gradient ($\nabla \theta$), and the term $-\operatorname{div}(\alpha(x)\nabla u_t)$ represents the heat generated by the rate of mechanical deformation ($u_t$). Finally, the boundary conditions reflect a clamped plate with a constant (zero) temperature maintained along the edge: $u=\frac{\partial u}{\partial\nu}=0$ and $\theta=0$ on $\Gamma_T$.
	
%This system captures the full two-way thermoelastic coupling, where thermal gradients ($\nabla \theta$) induce mechanical forces, and the rate of mechanical deformation ($u_t$) acts as a heat source.

	%Although there have been tremendous advancements in the domains such as nuclear engineering, aircraft, machine construction, and several other related fields over the past few decades, the coupled effect between deformation and temperature has been a key factor for the solution of many thermal shock problems. The analysis of the impact of the temperature field and stresses produced during thermal shock, which might result in premature failure, and understanding of thermoelastic damping in devices is crucial in many engineering situations (see, \cite{andrew2018}, \cite{bhullar},  \cite{timoshenko}). Therefore,  it is extremely important to understand the effect of the thermal and mechanical sources that result in the material's deflection. 
	%	Given all the parameters, initial data, and source terms of \eqref{1}, finding the solution $(u,\theta)$ is referred to as the \emph{direct problem}. 
	In this paper, we determine the unknown thermal expansion coefficient $ \alpha(x)$ in the thermoelastic plate equation \eqref{1a}-\eqref{1d} from the final time-measured displacement
	\begin{eqnarray}\label{21}
		u_{T}(x):=u(x,T),\ \ x \in \Omega.\label{i1}
	\end{eqnarray}
The unknown coefficient $\alpha(x)$ is appears in both equations \eqref{1a}, \eqref{1b} and, more importantly, it is embedded within the differential operator. Consequently, the inverse coefficient problem is considerably more challenging than the corresponding inverse problems involving a single equation.

The determination of the thermal expansion coefficient in a thermoelastic plate is essential for various applications where structural performance is affected by the temperature changes. Thermoelastic plates are used in satellite panels and aircraft components to maintain stability or functionality in the case of temperature fluctuations. In electronics, printed circuit boards and microelectronic mechanical systems depend on thermoelastic materials to prevent deformation and ensure long-term reliability under thermal stress. Moreover, manufacturing processes like welding require accurate thermal expansion data to minimize residual stresses and ensure dimensional accuracy.
	Because $\alpha(x)$ may vary spatially in composites or in materials subjected to thermal aging, reconstructing this coefficient is practically important for diagnostics, material characterization, and integrity assessment.
	These applications lead to the importance of identifying the thermal expansion coefficient to optimize structural integrity and performance across industries (see, \cite{esuhir},\cite{xxing}).
    
    Next, we briefly review the literature on inverse problems for classical thermoelastic system, which involves a coupled system of hyperbolic equation for displacement, and the heat equation for temperature. The identification of the coefficients in such systems, based on measured displacement in a subdomain over a sufficiently large time interval, was studied in \cite{wu1}. Using the Carleman estimate method, the authors established a Lipschitz stability estimate for the solution to the inverse problem. The inverse source problem for a generalized thermoelastic system was analyzed using the Carleman estimate method in \cite{bellassoued2010}. In \cite{bwu}, the determination of a spatially varying unknown source term in a thermoelastic system with memory from measured displacement in a subdomain over a sufficiently large time interval was examined. The authors provided a Hölder stability estimate using the Carleman estimate method. The inverse problem of a space-dependent vector source in a thermoelastic system, based on measured final-time deflection, was investigated in \cite{van2015}. The authors proved the uniqueness of the solution to the direct problem using a variational approach and presented a numerical reconstruction for the solution to the inverse problem.
	
	Recently, the first study on the simultaneous identification of spatial load $ F(x,t)$ and temporal load $G(x,t)$ in the following thermoelastic plate
	\begin{subnumcases}{}
		& $\rho h  u_{tt}-\omega \Delta u_{t}+D\Delta ^2 u+ \beta_1 \Delta \theta = F(x,t), \;\; \; (x,t)\in \Omega_T,$ \nonumber   \\ [1pt]
		&$\beta_2	\theta_{t}- \Delta\theta -\beta_{3} \Delta u_t+\beta_4 \theta= G(x,t), \quad\; \ (x,t)\in \Omega_T,$\nonumber
	\end{subnumcases}
	with same set of boundary and initial conditions in \eqref{1c}-\eqref{1d}, using the measured deflection at the final time was conducted in \cite{anjuna}. They proved the stability estimate and uniqueness result for the solution to the inverse problem.  Motivated by this result, we aim to identify the variable thermal coefficient in the anisotropic plate model \eqref{1a}-\eqref{1d}. However, unlike \cite{anjuna}, where the unknowns appear as additive source terms, in this work, the unknown coefficient multiplies gradient fields inside the operator, leading to a fundamentally nonlinear inverse problem. Further, in the model \eqref{1a}-\eqref{1d}, we also include the rotational effect ($r\Delta u_{tt}$) in the plate, which also brings substantial technicalities to this study.  
	
	In this paper, we establish the existence of a quasi-solution to the coefficient inverse problem using the approach introduced in \cite{Tikhonov} (see also \cite{hasanouglu2017introduction}). This method has been extensively applied to study inverse source problems in beam and plate models, diffusion equations and higher order systems of PDEs (see, for example, \cite{OAA, sdbt2014, hasanov2007, hasanov2009identification, hasanouglu2019, van2019,NKM,sakthivel2011,TLK}).
	
	To the best of our knowledge, this is the first work addressing the identification of a spatially varying thermoelastic coupling coefficient in a fully coupled anisotropic Kirchhoff–Love plate model using only a final-time displacement measurement.
	
	The main contributions of this paper are summarized as follows:
	\begin{enumerate}
		\item By adopting the method developed in \cite{anjuna}, we establish the existence and uniqueness of the solution to the direct problem with minimal regularity assumptions on the thermal expansion coefficient $ \alpha(x).$ However, unlike the isotropic plate, the anisotropic plate poses significant challenges in proving the existence and uniqueness of the solution to the direct problem \eqref{1a}-\eqref{1d}. We explicitly handle these analytical obstacles by establishing appropriate energy estimates and compactness arguments tailored to the anisotropic coupling structure.
		
		\item The nonlinear inverse problem of identifying the thermal expansion coefficient $ \alpha(x)$ from the final-time measured deflection $u_T(x)=u(x,T)$ is solved using the Tikhonov regularization method (\cite{Tikhonov},\cite{hasanouglu2017introduction}). 
		
		The inverse problem of determining the coefficients embedded within the differential operators in both equations of the coupled thermoelastic plate system presents a significant challenge. This is distinct from the source identification problem as well as the coefficient identification problem for a single equation. Nevertheless, the results presented in this article offer a strong foundation for further analysis of various coefficients within this specific model.

			% combine and modify this

	\end{enumerate}

	The other sections of this paper are organized as follows. Section \ref{S-WP} provides a detailed analysis of the well-posedness of the direct problem \eqref{1a}-\eqref{1d}. The regularity of the weak solution to the direct problem is studied in Section \ref{S-RWS}. In Section \ref{S-IP}, the inverse problem is formulated, and the compactness as well as Lipschitz continuity of the input-output operator is given. The existence of a minimizer for the functional is studied in this section.

	\section{Well-posedness of the direct problem}\label{S-WP}
	In this section, we analyze the existence, uniqueness, and regularity properties of the solutions to the direct problem \eqref{1a}-\eqref{1d}, which are essential for the analysis of the inverse problem.

	\subsection{Function spaces and classical inequalities}
	First, to begin the construction of a weak solution, we recall the relevant function spaces, norms, and basic inequalities that will be used throughout the analysis. We introduce the following Sobolev spaces equipped with boundary conditions compatible with our model.
	\iffalse
    \begin{eqnarray*} 
		H_0^1(\Omega)&:=&\{ w\in H^1(\Omega): w=0 \ \mbox{on} \ \Gamma \}, \\
		H_0^2(\Omega)&:=& \{ v\in H^2(\Omega): v=0 \ \mbox{and} \ \frac{\partial v}{\partial \textnormal{n}} =0 \ \mbox{on}\ \Gamma \},\\
		\mathcal{V}^2(\Omega)&:=&\{ w\in H^2(\Omega): w=0 \ \mbox{on} \ \Gamma \},\\
		\mathcal{V}^3(\Omega)&:=& \{ v\in H^3(\Omega): v=0 \ \mbox{and} \ \frac{\partial v}{\partial \textnormal{n}} =0 \ \mbox{on}\ \Gamma \}  .
	\end{eqnarray*}
	Each of the spaces $H_0^1(\Omega)$, $H_0^2(\Omega)$, $\mathcal{V}^2(\Omega)$ and $\mathcal{V}^3(\Omega)$ is endowed with the standard Sobolev norms inherited from $H^1(\Omega)$, $H^2(\Omega)$, $H^2(\Omega)$ and $H^3(\Omega)$  respectively. 
\fi
For an integer $k \ge 1$ and $j \in \{0,1\}$, define
\begin{equation*}
\mathcal{X}^k_j(\Omega) := \left\{ w \in H^k(\Omega) : w = 0 \text{ on } \Gamma, \ \text{and } \frac{\partial w}{\partial \nu} = 0 \text{ on } \Gamma \text{ if } j = 1 \right\},
\end{equation*}
endowed with the Sobolev norm inherited from $H^k(\Omega)$, i.e.
\begin{equation*}
\|w\|_{\mathcal{X}^k_j(\Omega)} := \|w\|_{H^k(\Omega)}, \qquad w \in \mathcal{X}^k_j(\Omega).
\end{equation*}
In particular, we use the following cases frequently in the paper:
\begin{align*}
H_0^1(\Omega) = \mathcal{X}^1_0(\Omega), \ \ \
H_0^2(\Omega) = \mathcal{X}^2_1(\Omega), \  \ \
\mathcal{V}^2(\Omega) = \mathcal{X}^2_0(\Omega), \ \ \
\mathcal{V}^3(\Omega) = \mathcal{X}^3_1(\Omega).
\end{align*}

Further details regarding these spaces and their properties can be found in \cite{lcevanspartial2010}. Classical elliptic regularity results under homogeneous Dirichlet boundary conditions (see \cite{gilbarg1998elptic}, Corollary 8.7 and Theorem 8.12) yield the estimate:
	\begin{eqnarray}\label{8}
		\| w\|_{H^{2}(\Omega)}&\leq& \sqrt{C_1} \|\Delta w\|_{L^{2}(\Omega)},
	\end{eqnarray} 
and by the Poinca\'re inequality, we also have 
	\begin{eqnarray}\label{8.1}
		\| w\|_{H_0^1(\Omega)}\leq \sqrt{C_2} \|\nabla w\|_{L^{2}(\Omega)},
	\end{eqnarray} 
	where $ C_1$ and $ C_2$ are positive constants depending only on the domain.  
Moreover, the following general result is known:   
\begin{lemma}[see, \cite{KW}]\label{EN}
	Let $\Omega$ be a bounded smooth domain in $\mathbb{R}^n$ and $k \in \mathbb{N}$. There exists a constant $C_{k,n}>0$ such that for all $w \in H^{k+2}(\Omega)$ and $\frac{\partial w}{\partial\nu}\big|_{\partial \Omega}=0,$ it holds that
	\begin{eqnarray}\label{ES1}
		\|w\|_{H^{k+2}(\Omega)} \leq C_{k,n} \left(\|w\|_{L^2(\Omega)}+ \|\Delta w\|_{H^k(\Omega)}\right).	
	\end{eqnarray}
\end{lemma}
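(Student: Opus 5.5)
This is the standard elliptic regularity estimate for the Neumann problem. The plan is to reduce it to the classical $H^{k+2}$ bound for $-\Delta w + w = f$ with homogeneous Neumann data, and then absorb the lower-order term.

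\emph{Setting up the Neumann problem.} Put $f := w - \Delta w$, so that $w$ solves
\begin{equation*}
-\Delta w + w = f \ \text{in } \Omega, \qquad \frac{\partial w}{\partial\nu} = 0 \ \text{on } \partial\Omega,
\end{equation*}
with $f \in H^k(\Omega)$. The bilinear form $\int_\Omega (\nabla w\cdot\nabla \varphi + w\varphi)\,dx$ is coercive on $H^1(\Omega)$. Hence $w$ is the unique weak solution. Testing with $\varphi = w$ gives the $H^1$ bound $\|w\|_{H^1(\Omega)} \le \|f\|_{L^2(\Omega)}$.

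\emph{Boundary regularity and the $H^{k+2}$ bound.} Invoke the classical regularity theory for smooth bounded domains (Agmon--Douglis--Nirenberg, or Evans/Grisvard for the Neumann problem). This is proved by flattening the boundary, taking difference quotients in the tangential directions, and recovering the normal second derivative from the equation. The step at the boundary is the main technical obstacle, namely checking that the Neumann condition is preserved under flattening so the tangential difference quotients are admissible test functions. The standard treatment handles this, since $\partial_\nu w = 0$ becomes a natural condition that requires no test function to vanish on the boundary. Induction on $k$, differentiating tangentially and using the equation for the normal derivatives, yields
\begin{equation*}
\|w\|_{H^{k+2}(\Omega)} \le C\left(\|f\|_{H^k(\Omega)} + \|w\|_{L^2(\Omega)}\right).
\end{equation*}
Here $C$ depends only on $k$, $n$ and $\Omega$.

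\emph{Absorbing the lower-order term.} Since $\|f\|_{H^k} \le \|w\|_{H^k} + \|\Delta w\|_{H^k}$, the estimate above still contains the intermediate norm $\|w\|_{H^k}$, which must be removed. For $k \ge 1$, use the interpolation inequality
\begin{equation*}
\|w\|_{H^k} \le \varepsilon \|w\|_{H^{k+2}} + C_\varepsilon \|w\|_{L^2}.
\end{equation*}
This follows from Ehrling's lemma, using the compact embedding $H^{k+2} \hookrightarrow H^k$ on a bounded smooth domain. Choosing $\varepsilon$ small absorbs the $\|w\|_{H^{k+2}}$ term into the left-hand side, which gives \eqref{ES1}.

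Alternatively, one can induct upward. Apply the estimate at level $k-2$ (with the base case $k = 0$) to bound $\|w\|_{H^k}$ by $\|\Delta w\|_{H^{k-2}} + \|w\|_{L^2}$. Odd $k$ is handled through the $H^1$ bound above.
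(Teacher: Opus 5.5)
Your argument is correct and takes essentially the same route as the paper: the paper gives no proof beyond citing the standard $H^{k+2}$ regularity estimate for the Neumann Laplacian in Wehrheim's monograph \cite{KW}, and you likewise rest everything on classical Neumann boundary regularity (Agmon--Douglis--Nirenberg or Grisvard). Your reduction to $-\Delta w + w = f$, followed by the Ehrling absorption of the intermediate norm $\|w\|_{H^k(\Omega)}$ (or the upward induction with base cases $k=0,1$), is a valid way to bring the classical estimate into the stated form, and you are right that the constant must also depend on $\Omega$.
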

In the case of the boundary condition $w\big|_{\partial \Omega}=0,$ doing integration by parts and invoking the Poinca\'re inequality $\| w\|_{L^2(\Omega)}\leq C_P \|\nabla w\|_{L^{2}(\Omega)}$, we have:
	\begin{eqnarray*}
		\|\nabla w\|_{L^{2}(\Omega)} \leq C_P \|\Delta w\|_{L^{2}(\Omega)}\leq C_P \|\Delta w\|_{H^{k}(\Omega)}.
	\end{eqnarray*} 
Thus, from \eqref{ES1}, there exists a constant $C_1>0$ such that for all $w \in H^{k+2}(\Omega),$ we get 
\begin{eqnarray}\label{88}
		\|w\|_{H^{k+2}(\Omega)} \leq \sqrt {C_1} \|\Delta w\|_{H^k(\Omega)}, \ \ \  \forall k\in \mathbb N.	
	\end{eqnarray}
The embedding $H^2(\Omega)\hookrightarrow L^{\infty}(\Omega)$ is continuous, and there exists a constant $\widetilde{C}>0$ such that
	\begin{equation}\label{S-EI}
	\|w\|_{L^\infty(\Omega)}^2 \leq \widetilde{C}\, \|w\|_{H^2(\Omega)}^2.	
	\end{equation}
	 
	The inner product in  $ L^2(\Omega)$ is denoted by $ (\cdot,\cdot)$. For the duality pairing between $H_0^m(\Omega)$ and it's dual $H^{-m}(\Omega),$ we use $ \langle\cdot,\cdot\rangle$, and $\|\cdot\|_{H^{-m}(\Omega)}$ denotes the corresponding dual norm.
	
	%\noindent We will frequently use the following inequality throughout the paper. 
	%\begin{lemma}\label{9..}(Gronwall's inequality) (see, \cite{salsa2015}, section 10.3.2 ) Let $\xi,$ $\mathcal{G}$  be continuous functions on $ [0,T],$ with $\mathcal{G}$ non-decreasing and let $ C>0$ a positive constant. If $ \xi(t) \leq \mathcal{G}(t)+ C \int_0^t \xi(s) ds, \  \mbox{for all} \ t \in [0,T],$ then
	%	\begin{eqnarray*}
%			\xi(t) \leq \mathcal{G}(t)\exp(C\,t), \  \mbox{for all} \ t \in [0,T].
	%	\end{eqnarray*} Moreover, when $\xi$ is non-negative and $\xi(t) \leq C %\int_0^t \xi(s)\, ds, \ \forall \, t \in [0,T],$ then $\xi(t)=0$ on $[0,T]$. 
	%\end{lemma}

%	\begin{lemma}\label{ag} (Agmon's inequality, \cite{agmon2}, Lemma 13.2)
%		Let $\Omega \subset \mathbb{R}^n$, and $ u\in H^m(\Omega)$ with $ m>\frac{n}{2}.$ Then there exists a constant $ \gamma>0$ which depending on $\Omega$ and $m$ such that 
%		\begin{eqnarray}
%			\Vert u \Vert _{L^{\infty}(\Omega)}\leq \gamma \Vert u \Vert_{H^m(\Omega)}^{\frac{n}{2m}} \ \Vert u \Vert_{L^2(\Omega)}^{1-(\frac{n}{2m})}.
%		\end{eqnarray} 
%	\end{lemma} 
%	\noindent In particular, if $n=2$ and $ m=2,$ we obtain
%	\begin{eqnarray}\label{a123}
%		\Vert u \Vert _{L^{\infty}(\Omega)}\leq \gamma \Vert u \Vert_{H^2(\Omega)}^{\frac{1}{2}} \ \Vert u \Vert_{L^2(\Omega)}^{\frac{1}{2}}. 
%	\end{eqnarray}

	\subsection{Existence and uniqueness of weak solution}\label{S31}
	In this subsection, we introduce the assumptions on the coefficients and source terms required to establish the existence and uniqueness of the weak solution to the direct problem using the Faedo-Galerkin method.	
	\begin{Ass}\label{a2}
		We assume that
		\begin{itemize}
			\item the constant $r$ is strictly positive;
			\item the coupling coefficient $\alpha(x)$ is bounded above by a positive constant $\alpha_1$ for all $x \in \Omega$;
			\item the source terms satisfy $F \in L^{2}(0,T;L^{2}(\Omega))$ and $G \in L^{2}(0,T;L^{2}(\Omega))$.
		\end{itemize}
	%	\begin{eqnarray*} 
	%		\left \{ \begin{array}{ll}
	%			r \ \mbox{is a positive constant,}\\
	%			\alpha \leq \alpha_1, \ \forall x \in \Omega, \ \mbox{and} \ \alpha_1>0 \ \mbox{is \ a \ constant}   \\
	%			F\in L^2(0,T;L^2(\Omega)), \ G\in L^2(0,T;L^2(\Omega)).
	%		\end{array} \right.
	%	\end{eqnarray*}
	\end{Ass}
	\begin{definition}\label{4}
		A function pair $ (u,\theta) \in L^2(0,T;H_0^2(\Omega))\times L^2(0,T;H_0^1(\Omega))$ with  $ u_t\in L^2(0,T;H_0^1(\Omega)), $ $u_{tt}\in L^2(0,T;H^{-2}(\Omega)), (u-r\Delta u)_{tt}\in L^2(0,T;H^{-2}(\Omega))$ and $\theta_{t}\in L^2(0,T;H^{-1}(\Omega))$  is called a weak solution of \eqref{1a}-\eqref{1d} if 
		\begin{enumerate}
			\item for each  $v\in H_0^2(\Omega), w\in H_0^{1}(\Omega),  \mbox{and almost every} \ t\in[0,T]$, the following equalities holds:
			\begin{subequations}
				\begin{alignat}{4}
					&\langle \left(u(t)-r\Delta u(t)\right)_{tt}, v \rangle +(\Delta u(t), \Delta v)-(\alpha\nabla \theta (t), \nabla v)=(F(t),v) \label{8.111a}\\  
					&	\langle \theta_{t}(t),w \rangle +(\nabla \theta(t),\nabla w)+ (\alpha\nabla u_{t}(t),\nabla w)=(G(t),w), \label{8.111b} 
				\end{alignat}
			\end{subequations}
			\item {$  \ u(0)=u_0, \ u_{t}(0)=v_0, \ \mbox{and} \ \theta(0)=\theta_0.$}
		\end{enumerate}
	\end{definition}
	
	\begin{remark}\label{r1}
		From the regularity required in Definition \ref{4}, we infer that $ (u,\theta)\in H^1(0,T;H_0^1(\Omega))\times H^1(0,T;H^{-1}(\Omega))$. Consequently, $ (u,\theta)$ belongs to $ C([0,T];H_0^1(\Omega) )\times C([0,T];H^{-1}(\Omega))$ and $ u_t \in C([0,T];H^{-2}(\Omega)).$ Thus the equality $ u(0)=u_0, u_{t}(0)=v_0$ and $\theta(0)=\theta_0$ can be justified in the trace sense. 
	\end{remark}
	
	\noindent \textbf{Faedo–Galerkin approximation.}
	We now apply the  Faedo-Galerkin  method to prove the solvability of \eqref{1a}-\eqref{1d}. Let $\{ \xi_{m}\}_{m=1}^{\infty}$ be an orthonormal basis of $L^{2}(\Omega)$ consisting of the eigenfunctions corresponding to the eigenvalues $\lambda_k$ of the biharmonic operator $\Delta^2$ subject to clamped boundary condition $\xi_{m}= 0$ on $\Gamma$ and $ \frac{\partial \xi_m}{\partial \textnormal{n}}=0$ on $\Gamma$. These eigenfunctions are smooth and form an orthogonal basis for $H_0^2(\Omega)$.
	
	Moreover, 	$\{\chi_m\}_{m=1}^{\infty}$ denotes the normalized eigenfunctions of $-\Delta$ corresponding to the eigenvalue $\beta_{k}$  in $H_0^1(\Omega)$, which form an orthogonal basis of $H_0^1(\Omega)$ and an orthonormal basis for $L^2(\Omega)$. Moreover, consider $\mathbb{P}_n: L^2(\Omega)\to W_n$ and $\mathbb{Q}_n:L^2(\Omega)\to V_n$ to be the orthogonal projection on the space $W_n=span\{\xi_1,\xi_2,...,\xi_n\}$ and $V_n=span\{\chi_1,\chi_2,...,\chi_n\}$, respectively. 
	
	Let $u_n(x,t)$ and $\theta_n(x,t)$ be the Galerkin approximation of the form 
	$	u_{n}(t)=\sum_{m=1}^{n} r_{m,{n}}(t)\xi_{m}, \ \theta_n(t)= \sum_{m=1}^{n} d_{m,n}(t) \chi_m. $
	Then, consider the following Galerkin form equivalent to a set of ordinary differential equations (ODEs):
	\begin{subnumcases}{\label{2}}
		\left( (u_{n}(t)-r\Delta u_{n}(t))'' , \xi_m \right)
		+ (\Delta u_{n}(t), \Delta \xi_m)
		- (\alpha \nabla \theta_{n}(t), \nabla \xi_m)
		= (F(t), \xi_m),  \label{2a} \\[4pt]
		(\theta_{n}'(t), \chi_m)
		+ (\nabla \theta_{n}(t), \nabla \chi_m)
		+ (\alpha \nabla u_{n}'(t), \nabla \chi_m)
		= (G(t), \chi_m), \label{2b} \\[4pt]
		u_{n}(0) = u_{0,n}, \quad
		u_{n}'(0) = v_{0,n}, \quad
		\theta_{n}(0) = \theta_{0,n}. \label{2c}
	\end{subnumcases}
	where $u_{0,n}=\mathbb{P}_n(u_0)$, $v_{0,n}=\mathbb{P}_n(v_0)$ and $\theta_{0,n}=\mathbb{Q}_n(\theta_0)$.
	
	Next, by considering the source functions $F,G\in C([0,T];L^2(\Omega))$, and the existence theory of ODEs (see, \cite{PH}), we can obtain that there exists a local-in-time regular solution to the system \eqref{2a}-\eqref{2c}. Furthermore, by deriving an appropriate \textit{a priori estimate}, we can prove that the solution does not blow up before the final time $T$.

%	with corresponding expansions fo the initial data
%	$$u_{0,n}=\sum_{m=1}^{n} e_{m,{n}}\xi_{m}, \ \ v_{0,n}=\sum_{m=1}^{n} f_{m,{n}}\xi_{m}, \ \ \  \text{and}\ \ \ 	\theta_{0,n}= \sum_{m=1}^{n} g_{m,n} \chi_m,$$		
%	\begin{eqnarray*} 
%		u_{n}(t)&=&\sum_{m=1}^{n} r_{m,{n}}(t)\xi_{m}, \ \ \theta_n(t)= \sum_{m=1}^{n} d_{m,n}(t) \chi_m, \\ 
%		u_{0,n}&=&\sum_{m=1}^{n} e_{m,{n}}\xi_{m}, \ \ v_{0,n}=\sum_{m=1}^{n} f_{m,{n}}\xi_{m} \ \
%		\theta_{0,n}= \sum_{m=1}^{n} g_{m,n} \chi_m
%	\end{eqnarray*}
		
	Therefore, there exist coefficients $ r_{m,n}(t)$ and $d_{m,n}(t)$  such that $ (u_n(t),\theta_{n}(t)) $ for all  $t\in [0,T]$ that satisfy the system \eqref{2a}-\eqref{2b}. Moreover, by using the density of $C([0,T];L^2(\Omega))$ in $L^2(0,T;L^2(\Omega))$, it is clear that the above result also holds for source functions $F,G\in L^2(0,T;L^2(\Omega))$.

	\begin{theorem}\label{t1}
		Suppose the initial data $ u_{0}\in H_0^2(\Omega)$, $v_0\in H_0^1(\Omega)$ and $ \theta_0 \in L^2(\Omega)$. Then under Assumption \ref{a2}, the direct problem \eqref{1a}-\eqref{1d} has a unique weak solution based on Definition \ref{4}. Furthermore, the following energy estimates hold:
		\begin{eqnarray}
			\Vert  (u,\theta)\Vert^2_{L^{2}(0,T;H_0^2(\Omega))\times L^2(0,T;H_0^1(\Omega))}&\leq& \left(C_1T+\frac{C_2}{2}\right) \exp(T) R_1(\theta_0,u_0,v_0,F,G), \label{5.4}\\ 
			\!\!\!\!\!\!\!\Vert (u_{t},\theta_{t})\Vert^2_{L^\infty(0,T;H_0^1(\Omega))\times L^2(0,T;H^{-1}(\Omega))}\!\!\!&\leq& \!\!\!3\left(\frac{5}{6}\exp(T)+\frac{\exp(T)}{3r}\!+\!\alpha_1^2 \frac{T\exp(T)}{r} \!+\!1\right)R_1(\theta_0,u_0,v_0,F,G),
			\label{ke}
			%			\Vert \theta \Vert^2 _{L^\infty(0,T;L^2(\Omega))}	&\leq& \left(C_e+1\right)R_1(\theta_0,u_0,v_0,F,G),
		\end{eqnarray}
%		and
%		\begin{eqnarray}
%			\Vert \nabla u_{t}\Vert^2_{L^2(0,T; L^2(\Omega))}&\leq &  \frac{T}{r} (C_e+1) \ R_1(\theta_0,u_0,v_0,F,G), \label{5.4.1} \\ 
%			\Vert \left(u-r\Delta u\right)_{tt}\Vert^2_{L^2(0,T;H^{-2}(\Omega))}
%			&\leq& C_4 R_1(\theta_0,u_0,v_0,F,G), \label{7.8.9}
%		\end{eqnarray}
and 
\begin{eqnarray}
			\Vert\left( u-r\Delta u\right)_{tt}\Vert^2_{L^2(0,T;H^{-2}(\Omega))}\leq  3\left( T \exp(T)+\alpha_1^2 \frac{\exp(T)}{2}+1\right) R_1 (\theta_0, u_0, v_0, F,G), \label{utt}
\end{eqnarray}
where the constants $ C_1$ and $C_2$ are coming from estimates (\ref{8}) and (\ref{8.1}) respectively, and  
		\begin{align}\label{r1q}
			R_1(\theta_0,u_0,v_0,F,G)= &\Vert F \Vert^2_{L^2(0,T;L^2(\Omega))} + \Vert G\Vert^2_{L^2(0,T;L^2(\Omega))}\nonumber\\
		  & + \|v_0\|_{L^2(\Omega)}^2 +r\Vert \nabla v_{0}\Vert^2_{L^2(\Omega)}+\Vert \Delta u_{0}\Vert^2_{L^2(\Omega)}+\Vert \theta_{0}\Vert^2_{L^2(\Omega)}.
		\end{align}
	\end{theorem}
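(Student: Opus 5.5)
We will follow the standard Faedo--Galerkin scheme already set up in \eqref{2a}--\eqref{2c}: derive uniform a priori bounds for $(u_n,\theta_n)$, pass to the limit by weak(-$*$) compactness, and prove uniqueness by an energy argument on the difference of two solutions. The essential structural point is that the coupling terms cancel in the energy identity. Multiply \eqref{2a} by $r_{m,n}'(t)$ and sum over $m$; this is legitimate because $u_n'\in W_n$ and the test functions range over $W_n$. Multiply \eqref{2b} by $d_{m,n}(t)$ and sum. Adding the two identities, the terms $-(\alpha\nabla\theta_n,\nabla u_n')$ and $+(\alpha\nabla u_n',\nabla\theta_n)$ cancel exactly, whatever $\alpha$ is. We obtain
\begin{equation*}
\frac12\frac{d}{dt}\Bigl(\|u_n'\|^2+r\|\nabla u_n'\|^2+\|\Delta u_n\|^2+\|\theta_n\|^2\Bigr)+\|\nabla\theta_n\|^2=(F,u_n')+(G,\theta_n).
\end{equation*}
We bound the right-hand side by Cauchy--Schwarz and Young as $\tfrac12(\|F\|^2+\|G\|^2)+\tfrac12(\|u_n'\|^2+\|\theta_n\|^2)$. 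Gronwall then gives
\begin{equation*}
E_n(t):=\|u_n'\|^2+r\|\nabla u_n'\|^2+\|\Delta u_n\|^2+\|\theta_n\|^2,\qquad E_n(t)+2\int_0^t\|\nabla\theta_n\|^2\le e^{T}R_1,
\end{equation*}
where we use $E_n(0)\le$ the data part of $R_1$, since the projections are nonexpansive. The paper's bases are chosen so that this holds also for the $r\|\nabla v_{0,n}\|^2$ term; if not, we would use $\mathbb{P}_n$ stability in $H_0^1$.

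From this bound the estimates follow. Using \eqref{8} and \eqref{8.1}, integrating in $t$ gives \eqref{5.4}. The bound $r\|\nabla u_n'\|^2\le e^TR_1$ gives the $L^\infty(0,T;H_0^1)$ estimate of $u_n'$. For $\theta_n'$ we take $w\in H_0^1$ with $\|w\|_{H_0^1}\le1$ and write $w=\mathbb{Q}_nw+(I-\mathbb{Q}_n)w$. Since $\mathbb{Q}_n$ is the spectral projection of $-\Delta$, it is orthogonal in $H_0^1$ as well. Then \eqref{2b} gives
\begin{equation*}
\|\theta_n'\|_{H^{-1}}\le\|\nabla\theta_n\|+\alpha_1\|\nabla u_n'\|+\|G\|.
\end{equation*}
Squaring, integrating, and using the previous bound (with the factor $1/r$ on $\nabla u_n'$) gives \eqref{ke}, up to the stated constants. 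In the same way, testing \eqref{2a} with $v\in H_0^2$ and using that $\mathbb{P}_n$ is the spectral projection of $\Delta^2$ (orthogonal in $H_0^2$) yields
\begin{equation*}
\|(u_n-r\Delta u_n)''\|_{H^{-2}}\le\|\Delta u_n\|+\alpha_1\|\nabla\theta_n\|+\|F\|,
\end{equation*}
and hence \eqref{utt}.

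For the limit passage we extract subsequences with $u_n\rightharpoonup^* u$ in $L^\infty(0,T;H_0^2)$ and $u_n'\rightharpoonup^* u_t$ in $L^\infty(0,T;H_0^1)$. We also get $\theta_n\rightharpoonup\theta$ in $L^2(0,T;H_0^1)$, $\theta_n'\rightharpoonup\theta_t$ in $L^2(0,T;H^{-1})$, and $(u_n-r\Delta u_n)''\rightharpoonup(u-r\Delta u)_{tt}$ in $L^2(0,T;H^{-2})$. The equations are linear in $(u,\theta)$ and $\alpha\in L^\infty$, so $\alpha\nabla\theta_n\rightharpoonup\alpha\nabla\theta$ weakly in $L^2$ and every term converges. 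To pass to the limit, we test with $\phi(t)\xi_m$ and $\phi(t)\chi_m$ for $\phi\in C^1_c$ and use density of the bases. The initial conditions follow by the usual integration by parts against $\phi$ with $\phi(T)=0$, $\phi(0)=1$, together with Remark \ref{r1}. The estimates pass to the limit by weak lower semicontinuity.

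For uniqueness we take the difference $(u,\theta)$ of two solutions, which has zero data. We test \eqref{8.111a} with $u_t$ and \eqref{8.111b} with $\theta$. Here lies \textbf{the main obstacle}: $u_t$ is only in $H_0^1$, not $H_0^2$, so it is not an admissible test function in \eqref{8.111a}, and the pairing $\langle(u-r\Delta u)_{tt},u_t\rangle$ must be justified. I would handle this by the Lions--Magenes/Ladyzhenskaya device: test with $v(t)=\int_t^s u(\tau)\,d\tau$ for $t\le s$ (and $0$ otherwise), which lies in $H_0^2$. This gives an integrated identity for $\|u(s)\|^2+r\|\nabla u(s)\|^2$ plus a Dirichlet-type term. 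Alternatively, we mollify in time and use that $(I-r\Delta)u_t\in C([0,T];H^{-1})$, since it pairs with $u_t\in H_0^1$. Once the energy identity is justified, the coupling terms again cancel, so the energy is nonincreasing from zero initial data. Gronwall then gives $u\equiv0$ and $\theta\equiv0$.
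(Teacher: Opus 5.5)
\textbf{Where the proposal breaks.} Your overall route matches the paper's: Galerkin with the same multipliers, exact cancellation of the coupling terms, Gronwall, weak limits, and energy uniqueness. One step, however, fails as you justify it, namely the uniform bound $\|(u_n-r\Delta u_n)''\|_{H^{-2}}\le\|\Delta u_n\|+\alpha_1\|\nabla\theta_n\|+\|F\|$.

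The duality argument via $\mathbb{P}_n$ needs $\langle(I-r\Delta)u_n'',v\rangle=\langle(I-r\Delta)u_n'',\mathbb{P}_nv\rangle$. The difference of the two sides is $r(\nabla u_n'',\nabla(v-\mathbb{P}_nv))$, and this need not vanish. $\mathbb{P}_n$ is orthogonal for $(\cdot,\cdot)$ and for $(\Delta\cdot,\Delta\cdot)$, but not for $(\nabla\cdot,\nabla\cdot)$, because $\Delta\xi_m$ violates the clamped conditions. So in general $(I-r\Delta)u_n''\notin W_n$, and the Galerkin system only controls its action on $W_n$.

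The same device does work for $\theta_n'$, and in \cite{anjuna}, where the inertia operator is a multiple of the identity: there the differentiated quantity lies in the Galerkin space. The paper obtains \eqref{du1} only by citing \cite{anjuna}, so it does not close this gap either.

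The same non-orthogonality means $\mathbb{P}_n$ is not a contraction for $\|\nabla\cdot\|$, contrary to your remark about the bases. For $E_n(0)$ your fallback is therefore the one you need: $\mathbb{P}_n$ is uniformly bounded on $H_0^1=[L^2,H_0^2]_{1/2}$ and $\mathbb{P}_nv_0\to v_0$ in $H_0^1$, which recovers the exact $R_1$ in the limit.

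\textbf{Repair.} Do not claim the Galerkin bound at all. Pass to the limit using only the energy bounds: test \eqref{2a} with $\phi(t)\xi_m$, $\phi\in C_c^\infty(0,T)$, and integrate by parts twice in time. Density of $\bigcup_nW_n$ in $H_0^2$ then gives, for every $v\in H_0^2$,
\[
\frac{d^2}{dt^2}\big[(u,v)+r(\nabla u,\nabla v)\big]=(F,v)-(\Delta u,\Delta v)+(\alpha\nabla\theta,\nabla v)\quad\text{in }\mathcal{D}'(0,T).
\]
The right-hand side defines an element of $L^2(0,T;H^{-2}(\Omega))$ whose norm at a.e.\ $t$ is at most $\|\Delta u\|+\alpha_1\|\nabla\theta\|+\|F\|$. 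This identifies $(u-r\Delta u)_{tt}$, and \eqref{utt} follows from the limiting energy bounds by weak lower semicontinuity.

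\textbf{Missing regularity of $u_{tt}$.} Definition~\ref{4} also requires $u_{tt}\in L^2(0,T;H^{-2}(\Omega))$, which your proposal never produces. It cannot be read off from \eqref{utt} by inverting $I-r\Delta$, as the paper does with $\|(I-r\Delta)^{-1}\|\le 1$. Take $z=e^{x_1/\sqrt r}$. Then $(I-r\Delta)z=0$, hence $(z,(I-r\Delta)v)=0$ for all $v\in H_0^2$, while $(z,v)>0$ for a nonnegative bump $v$. Approximating $z$ in $L^2$ by elements of $\bigcup_nW_n$ shows that no estimate $\|w\|_{H^{-2}}\le C\|(I-r\Delta)w\|_{H^{-2}}$ holds there. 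By Green's formula this regularity involves the boundary trace of $\Delta u$, so it needs a separate argument.

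\textbf{Uniqueness.} With the Ladyzhenskaya test function $v(t)=\int_t^s u\,d\tau$, the couplings do not cancel if the heat equation is still tested with $\theta$. You must integrate the heat equation in time and test it with $\Theta(t)=\int_0^t\theta\,d\tau$. Since $\Theta(0)=0$, $v(s)=0$ and $v'=-u$, one has $\int_0^s(\alpha\nabla\theta,\nabla v)\,dt=\int_0^s(\alpha\nabla\Theta,\nabla u)\,dt$. This gives $\|u(s)\|^2+r\|\nabla u(s)\|^2+\|\Delta v(0)\|^2+\|\Theta(s)\|^2+2\int_0^s\|\nabla\Theta\|^2\,dt=0$.

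Your second route is cleanest as the Lions--Magenes lemma for the triple $H_0^2\subset H_0^1\subset(H_0^2)'$ with pivot inner product $(\cdot,\cdot)+r(\nabla\cdot,\nabla\cdot)$. It applies because $F-\operatorname{div}(\alpha\nabla\theta)\in L^2(0,T;H^{-1}(\Omega))$, and it yields the standard energy identity, in which the couplings cancel as you say. The paper itself only refers to \cite{anjuna:2021} for uniqueness.
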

	\begin{proof}
		%\noindent {\bf A Priori estimates:}
		First, we multiply the equations \eqref{2a} and \eqref{2b} by $r_{m,n}'(t)$ and $ d_{m,n}(t)$, respectively, and take the sum over $m = 1,2,...,n$. Then, by integrating over $(0,t)$, applying integration by parts, and using the initial conditions \eqref{2c}, we get the following:		
		\begin{eqnarray}
			&&\frac{1}{2}\int_{\Omega} |u_{n}'(t)|^2 dx+\frac{r}{2} \int_{\Omega} |\nabla u_{n}'(t)|^2 dx - \int_{0}^{t} \int_{\Omega}\alpha(x)\nabla \theta_{n}(\tau)\cdot\nabla u_{n}'(\tau) \,dx \,d\tau +\frac{1}{2}\int_{\Omega}|\Delta u_{n}(t)|^2 dx \label{6}\\ 
			&&\hspace{2cm} = \int_0^t \int_{\Omega} F(x,\tau) u_n'(\tau) \,dx\, d\tau+\frac{1}{2}\int_{\Omega} |v_{0,n}|^2 dx +\frac{r}{2}\int_{\Omega}|\nabla v_{0,n}|^2 dx + \frac{1}{2}\int_{\Omega} |\Delta u_{0,n}|^2 dx, \nonumber \\
			&&\frac{1}{2}\int_{\Omega} |\theta_{n}(t)|^2 dx +\int_{0}^t \int_{\Omega}|\nabla \theta_{n}(\tau)|^2 \,dx\, d\tau  +\int_{0}^{t} \int_{\Omega}\alpha(x) \nabla u_{n}'(\tau)\cdot\nabla\theta_{n}(\tau) \,dx\, d\tau \nonumber \\
			&&\hspace{2cm}=  \int_0^t \int_{\Omega} G(x,\tau) \theta_n (\tau) \,dx\, d\tau+\frac{1}{2}\int_{\Omega} |\theta_{0,n}|^2 dx, \ \ \ \ \ \text{for all } t\in [0,T].\label{5}
		\end{eqnarray}
		Next, adding equations \eqref{6} and \eqref{5}, and applying Cauchy's inequality for the right-hand side source terms, we obtain
		\begin{eqnarray*}
			\lefteqn{\int_{\Omega} |u_{n}'(t)|^{2} dx +\int_{\Omega}|\theta_{n}(t)|^2 dx+r\int_{\Omega}|\nabla u_{n}'(t)|^2 dx +\int_{\Omega}|\Delta u_{n}(t)|^2 dx  +2\int_{0}^t\int_{\Omega}\vert\nabla\theta_{n}(\tau)\vert^2 dx d\tau} \nonumber\\
			&\leq& \int_0^t \int_{\Omega} |F(x,\tau)|^2 dx d\tau+ \int_{0}^t\int_{\Omega} |G(x,\tau)|^2 dx d\tau  +\int_0^t \int_{\Omega}\left( |u_n'(\tau)|^2 +|\theta_n(\tau)|^2\right)dx d\tau\nonumber \\ 
			&&\ \ \ \ +\int_{\Omega} |v_{0,n}|^2 dx+r\int_{\Omega}|\nabla v_{0,n}|^2 dx + \int_{\Omega} |\Delta u_{0,n}|^2 dx+\int_{\Omega} |\theta_{0,n}|^2 dx  \ \ \ \ \text{for all } t\in [0,T] . 
		\end{eqnarray*}
		Now, applying Gronwall's inequality, we get the following estimate for each $t\in [0,T]$:
		\begin{align*}
			\int_{\Omega} |u_{n}'(t)|^{2} dx+\int_{\Omega}|\theta_{n}(t)|^2 dx +r\int_{\Omega}|\nabla u_{n}'(t)|^2 dx & +\int_{\Omega}|\Delta u_{n}(t)|^2 dx  +2\int_{0}^t\int_{\Omega}\vert\nabla\theta_{n}(\tau)\vert^2 dx d\tau\nonumber\\
			& \leq \exp(T)\  R_1 (\theta_0, u_0, v_0, F,G). 
		\end{align*}
		We can derive the following norm estimates on $u_n$, $u_n'$ and $\theta_n$: 
		\begin{eqnarray}
			\max_{t\in[0,T]}\Vert u_{n}'(t)\Vert^2_{L^2(\Omega)}&\leq& \exp(T)  R_1 (\theta_0, u_0, v_0, F,G),\nonumber\\[1pt]
			\Vert u_{n}'\Vert^2_{L^2(0,T;L^2(\Omega))}&\leq&   T \exp(T)  R_1 (\theta_0, u_0, v_0, F,G) \nonumber,\\
			\Vert \nabla\theta_{n}\Vert^2_{L^2(0,T;L^2(\Omega))}&\leq& \frac{\exp(T)}{2} R_1 (\theta_0, u_0, v_0, F,G) , \label{7.11}
			\\ [1pt]
			\max_{t\in[0,T]}\Vert \nabla u_n'(t) \Vert_{L^2(\Omega)}^2&\leq& \frac{\exp(T)}{r}  R_1 (\theta_0, u_0, v_0, F,G), \label{mvc1}
			\\[1pt]
			\max_{t\in[0,T]} \Vert \Delta u_{n}(t)\Vert^2_{L^2(\Omega)}&\leq&  \exp(T) R_1 (\theta_0, u_0, v_0, F,G) ,\label{7.13} \\ [1pt]
			\Vert \Delta u_{n}\Vert^2_{L^2(0,T;L^2(\Omega))}&\leq&  T \exp(T) R_1 (\theta_0, u_0, v_0, F,G) .\label{27}		
		\end{eqnarray}
		By proceeding in the same manner as in Theorem 2.4 of \cite{anjuna}, and invoking Assumption \ref{a2}, we also obtain the estimate
		\begin{equation}\label{du1}
			\Vert \big(u_n(t)-r \Delta u_n(t)\big)^{\prime \prime}\Vert_{H^{-2}(\Omega)} \leq \Vert \Delta u_n(t)\Vert_{L^2(\Omega)}+\alpha_1\Vert \nabla \theta_n(t)\Vert _{L^2(\Omega)} +\Vert F(t)\Vert_{L^2(\Omega)},
		\end{equation}
		and 
		\begin{eqnarray}\label{du2}
			\Vert \theta_n'(t)\Vert_{H^{-1}(\Omega)} \leq  \Vert \nabla \theta_n(t)\Vert_{L^2(\Omega)}+\alpha_1 \Vert \nabla u_n'(t) \Vert_{L^2(\Omega)}+\Vert G(t)\Vert_{L^2(\Omega)}.
		\end{eqnarray}
		%where $ C_3= \max\{1,\alpha_1\}.$ 
		Now squaring on both sides in equations \eqref{du1} and \eqref{du2}, integrating over $ [0,T],$ and using the estimates \eqref{7.11}-\eqref{27}, we get
		\begin{eqnarray*}
			\Vert\left( u_n-r\Delta u_n\right)^{\prime \prime}\Vert^2_{L^2(0,T;H^{-2}(\Omega))}&\leq&  3\left( T \exp(T)+\alpha_1^2 \frac{\exp(T)}{2}+1\right) R_1 (\theta_0, u_0, v_0, F,G),  \\ 
			\Vert \theta_n'\Vert^2_{L^2(0,T;H^{-1}(\Omega))}&\leq&
			 3\left( \frac{\exp(T)}{2}+\alpha_1^2 \frac{T\exp(T)}{r}+1\right) R_1 (\theta_0, u_0, v_0, F,G).
		\end{eqnarray*} 
%		where $ C_4=3 C_3^2 \left( \bigg(C_e+1\bigg)\bigg(T+\frac{1}{2}\bigg)+1\right),$  		$C_5= 3 C_3^2 \left( \bigg( C_e+1\bigg) \bigg(\frac{T}{r}+\frac{1}{2}\bigg)+1\right).$ \\

	Next, if we consider the eigenvalues $\beta_k$ corresponding to the eigenfunctions $\chi_k$ of the $-\Delta$ operator, then each of the eigenvalues $1+r\beta_k$ of the operator $I-r\Delta$ satisfies $1+r\beta_k \geq 1$. Therefore, the operator $I-r\Delta$ is invertible. Moreover, its norm satisfies the inequality $\|(I-r\Delta)^{-1}\|\leq 1$. Using this norm estimate, we find the following bound for $u_n''$:
\begin{eqnarray*}
    \|u_n''\|_{L^2(0,T;H^{-2}(\Omega))} = \|(I-r\Delta)^{-1} (I-r\Delta)u_n''\|_{L^2(0,T;H^{-2}(\Omega))}\leq \|(u_n-r\Delta u_n)''\|_{L^2(0,T;H^{-2}(\Omega))}.
\end{eqnarray*}  	
%		$\left( u_n-r\Delta u_n\right)^{\prime \prime} \in L^2(0,T;H^{-2}(\Omega))\leq \|(u_n-r\Delta u%_n)''\|_{L^2(0,T;H^{-2})},$ 
%		we have \begin{eqnarray}
%			\Vert u_{n}^{\prime \prime}-r\Delta u_n^{\prime \prime}\Vert_{L^2(0,T;H^{-2}(\Omega))} &=& \Vert (1-r\lambda_n) u_{n}^{\prime \prime}\Vert_{L^2(0,T;H^{-2}(\Omega))}\ \nonumber \\ \leq& C_4 R_1 (\theta_0, u_0, v_0, F,G),
%		\end{eqnarray}
%		then $ u_n^{\prime \prime}\in L^2(0,T;H^{-2}(\Omega)).$

		Thus, from the estimates obtained above, the equality of norms estimate \eqref{8} and the Banach-Alagolu weak compactness theorem, we deduce that there exist a subsequence $\{u_{nk}\}$ of $ \{ u_n\}$, $ \{\theta_{nk}\}$ of $ \{\theta_n\}$ and functions $ u\in L^2(0,T;H_0^2(\Omega)),$ $\theta \in L^2(0,T;H_0^1(\Omega)),$ such that 
		\begin{eqnarray*}
			\left \{ \begin{array}{lclccl}
				u_{n_{k}}&\rightharpoonup& u \ &\mbox{weakly  in}  \ \ L^{2}(0,T;H_0^{2}(\Omega)), \\[.5pt]
				\Delta u_{n_k}&\rightharpoonup&\Delta u\ &\mbox{weakly  in} \ \ L^{2}(0,T;L^{2}(\Omega)),\\[.5pt]
				\nabla u_{n_k}'&\rightharpoonup& \nabla u'\ &\mbox{weakly  in} \ \ L^{2}(0,T;L^{2}(\Omega)), \\[.5pt]
				\left(u_{n_{k}}-r\Delta u_{n_{k}}\right)^{\prime \prime} &\rightharpoonup& \left(u-r\Delta u\right)^{\prime \prime} &\mbox{\ weakly  in} \ L^{2}(0,T;H^{-2}(\Omega)), \\ [.5pt] 
				u_{n_{k}}^{\prime \prime} &\rightharpoonup& u^{\prime \prime} \ \ &\mbox{\  weakly   in} \  L^{2}(0,T;H^{-2}(\Omega)),\\[.5pt]
				\theta_{n_{k}}&\rightharpoonup& \theta \ &\mbox{\ weakly  in}  \ \ L^{2}(0,T;H_0^1(\Omega)), \\[.5pt]
				\nabla \theta_{n_k}&\rightharpoonup&\nabla \theta\ &\mbox{weakly  in} \ \ L^{2}(0,T;L^{2}(\Omega)),\\[.5pt]
				\theta_{n_{k}}^{ \prime} &\rightharpoonup & \theta^{ \prime} \ \ \ & \mbox{\ \ weakly in} \ \ L^{2}(0,T;H^{-1}(\Omega)),
			\end{array} \right.
		\end{eqnarray*}
		as $k \to \infty.$ Applying the standard limit arguments to the equation \eqref{2a}-\eqref{2b}, we derive a weak solution to the direct problem \eqref{1a}-\eqref{1d} which satisfies the estimates \eqref{5.4} and \eqref{ke}. The uniqueness of the solution to the direct problem and verification of initial data can be done by following the steps provided in the proof of Theorem 1 in \cite{anjuna:2021}. Hence the proof.
	\end{proof}
	%\subsection{Regularizing effect of Kelvin-Voigt damping}
	%In this section, we examine the impact of Kelvin-Voigt damping on the regularity of the solution to the direct problem by incorporating Kelvin-Voigt damping into the physical model (see, []). Motivated by previous studies that explored the effects of various types of damping in beam-plate models (see, []), we provide an analysis of the regularizing effect of Kelvin-Voigt damping in this subsection.  Consider the following anisotropic thermoelastic plate equation with Kelvin-Voigt damping (see, [])
	%  
	%	\begin{subnumcases}{}\label{1.1}
		%	&		$ u_{tt}-r\Delta u_{tt}+ \kappa \Delta^2u_t +\Delta ^2 u+ \mbox{div}(\alpha(x).\nabla \theta) = F(x,t), \;\; \; (x,t)\in \Omega_T,$   \label{1.1a} \\ [1pt]
		%	&$\theta_{t}- \Delta\theta -\mbox{div}( \alpha(x). \nabla u_t)= G(x,t),\qquad \qquad \quad \ (x,t)\in \Omega_T,$
		%	\label{1.1b} \\ [1pt]
		%	&$ u=\frac{\partial u}{\partial\nu}=0, \ \theta =0 \qquad \qquad\; \qquad \qquad \qquad \qquad \quad (x,t)\in \Gamma_T,$ \\ [1pt]
		%	&	 $	u(x,0) = u_0, u_{t}(x,0)  = v_0, \; \theta(x,0)= \theta_0 \qquad \qquad \qquad x  \in \Omega,$ \nonumber,
		%\end{subnumcases}
		%where $ \kappa \Delta^2u_t$ is the Kelvin-Voigt damping term.\\
		%\textbf{Regularity of weak solution;}
		%
		%\begin{theorem}
		%	Let assumption \ref{a2} hold true. Then there exist a unique solution \\ $ u\in L^2(0,T; \mathcal{V}^2(\Omega)), \ u_t \in L^2(0,T;\mathcal{V}^2(\Omega)), \ u_{tt}\in L^2(0,T;H_0^1(\Omega)), \\ \theta \in L^2(0,T;H_0^1(\Omega)), \ \theta_t \in L^2(0,T;H^{-1}(\Omega)).$  
		%\end{theorem}

	\subsection{Regularity of weak solutions}\label{S-RWS}
		This section establishes the regularity of weak solutions to the direct problem by imposing stronger regularity constraints on the thermal expansion coefficient $\alpha(x)$ and the initial data. 
		\begin{theorem}\label{T-RWS}
			Suppose the initial data $ u_0 \in \mathcal{V}^{3}(\Omega),$ $v_0 \in H_0^2(\Omega)$ and $ \theta_0\in H_0^1(\Omega)$. Assume that Assumption \ref{a2} holds and, in addition, the thermal expansion coefficient $ \alpha \in H^{3}(\Omega)$. Then the corresponding weak solution to the system \eqref{1a}-\eqref{1d} attains the following higher regularity	
			\begin{align*}
				&(u,\theta)\in L^\infty(0,T;\mathcal{V}^3(\Omega))\times L^2(0,T; \mathcal{V}^2(\Omega)),\\
				(u_t,\theta_t)\in  &L^\infty(0,T;H_0^2(\Omega))\times L^2(0,T;L^2(\Omega))  \ \ \text{and}\ \   u_{tt}\in L^2(0,T;H_0^1(\Omega)).
			\end{align*}									
			\end{theorem}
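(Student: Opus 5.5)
We work at the level of the Faedo--Galerkin approximations $(u_n,\theta_n)$ from \eqref{2a}--\eqref{2c} and derive a second, higher-order energy identity. Because $\xi_m$ are eigenfunctions of the clamped biharmonic operator and $\chi_m$ are eigenfunctions of $-\Delta$, we may use $\Delta^2 u_n'$ as a test function in \eqref{2a}, since it lies in $W_n$. Likewise $-\Delta\theta_n$ lies in $V_n$ and may be used in \eqref{2b}. Testing with these gives, up to boundary terms that vanish thanks to the clamped and Dirichlet conditions, the quantities
$$\tfrac12\tfrac{d}{dt}\Big(\|\Delta u_n'\|^2 + r\|\nabla\Delta u_n'\|^2+\|\nabla\Delta u_n\|^2\Big)\quad\text{and}\quad \tfrac12\tfrac{d}{dt}\|\nabla\theta_n\|^2+\|\Delta\theta_n\|^2 .$$
Here the identity $(\Delta^2 u_n,\Delta^2 u_n')=\tfrac12\frac{d}{dt}\|\nabla\Delta u_n\|^2$ requires $\Delta u_n$ to be handled on the boundary. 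If this is problematic, we instead test with $-\Delta u_n''$ and $-\Delta u_n'$ to get the $H^2$-level bounds for $u_t$ and $H^3$ bounds for $u$ via Lemma \ref{EN}/\eqref{88}. The initial quantities are controlled by $\|u_0\|_{H^3}$, $\|v_0\|_{H^2}$ and $\|\theta_0\|_{H^1}$, which is exactly why the hypotheses $u_0\in\mathcal V^3$, $v_0\in H_0^2$, $\theta_0\in H_0^1$ are imposed.

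The coupling terms are handled by expanding $\operatorname{div}(\alpha\nabla\theta)=\alpha\Delta\theta+\nabla\alpha\cdot\nabla\theta$, and similarly for $u_t$. Since $\alpha\in H^3(\Omega)\hookrightarrow W^{1,\infty}(\Omega)$ (indeed $\alpha,\nabla\alpha\in H^2\hookrightarrow L^\infty$ by \eqref{S-EI}), we have
$$\|\operatorname{div}(\alpha\nabla\theta_n)\|\le C\|\alpha\|_{H^3}\|\theta_n\|_{H^2}.$$
This is absorbed into the dissipation $\|\Delta\theta_n\|^2$ via Young's inequality with a small parameter, combined with \eqref{8}. The cross term coming from the $\theta$-equation, $(\operatorname{div}(\alpha\nabla u_n'),\Delta\theta_n)$, is bounded by $C\|u_n'\|_{H^2}\|\Delta\theta_n\|$. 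It is again split with Young's inequality, and $\|u_n'\|_{H^2}$ is controlled by the energy. Summing, applying Gronwall, and using the bounds of Theorem \ref{t1} for the lower-order terms yields uniform-in-$n$ bounds for
$$u_n\ \text{in}\ L^\infty(0,T;H^3),\qquad u_n'\ \text{in}\ L^\infty(0,T;H^2),\qquad \theta_n\ \text{in}\ L^\infty(0,T;H^1)\cap L^2(0,T;H^2).$$

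Next come the time derivatives. For $\theta_n'$ we read off from \eqref{2b}, projected by $\mathbb Q_n$, that $\|\theta_n'\|\le\|\Delta\theta_n\|+C\|u_n'\|_{H^2}+\|G\|$, which gives the $L^2(0,T;L^2)$ bound. For $u_n''$ we test \eqref{2a} with $u_n''$, or invert $I-r\Delta$, which gains two derivatives. Then $(I-r\Delta)u_n''=F-\Delta^2u_n-\operatorname{div}(\alpha\nabla\theta_n)$. The right side is bounded in $L^2(0,T;H^{-1})$ using $u_n\in L^\infty H^3$ and $\theta_n\in L^2H^2$, so $u_n''\in L^2(0,T;H_0^1)$.

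Finally, weak-$*$ compactness (Banach--Alaoglu) together with the uniqueness from Theorem \ref{t1} identifies the limits with the weak solution, and weak lower semicontinuity transfers the bounds. The main obstacle is the top-order coupling. We need the $\alpha$-terms in both equations to either cancel or be absorbed, because the $H^3$-level test functions destroy the exact antisymmetric cancellation seen in \eqref{6}--\eqref{5}. If the absorption fails at the $u$-level, we plan to pick the test functions so that $\alpha\Delta\theta_n$ pairs against $\Delta u_n'$ symmetrically with $\alpha\Delta u_n'$ against $\Delta\theta_n$, leaving only commutator terms involving $\nabla\alpha$ and $\Delta\alpha$. These are lower order and are controlled using $\alpha\in H^3$.
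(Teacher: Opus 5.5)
Your primary multiplier is at the wrong derivative level, and the estimate you describe would not close. Testing \eqref{2a} with $\Delta^2u_n'=\sum_m\lambda_m r_{m,n}'\xi_m$ is admissible. However, since $(\Delta\xi_j,\Delta\xi_k)=\lambda_j\delta_{jk}$, the elastic term gives $(\Delta u_n,\Delta\,\Delta^2u_n')=\frac12\frac{d}{dt}\|\Delta^2u_n\|^2$, not $\frac12\frac{d}{dt}\|\nabla\Delta u_n\|^2$. The initial energy then contains $\|\Delta^2u_{0,n}\|$, which is not bounded for $u_0\in\mathcal V^3(\Omega)$. The rotational term $r(\nabla u_n'',\nabla\Delta^2u_n')$ is not an exact time derivative: it differs from $\frac r2\frac{d}{dt}\|\nabla\Delta u_n'\|^2$ by $r\int_\Gamma\Delta u_n''\,\partial_\nu\Delta u_n'\,d\Gamma$. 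Decisively, the coupling term becomes $(\operatorname{div}(\alpha\nabla\theta_n),\Delta^2u_n')$. This pairs $\|\theta_n\|_{H^2}$ with four derivatives of $u_n'$, which no energy in your scheme controls, so it cannot be absorbed into $\|\Delta\theta_n\|^2$.

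Both multipliers must sit exactly one derivative above \eqref{6}--\eqref{5}, namely $-\Delta u_n'$ and $-\Delta\theta_n$. That is your fallback, and it is precisely the paper's computation \eqref{cx}--\eqref{cx1}, which produces $\|\nabla u_n'\|^2+r\|\Delta u_n'\|^2+\|\nabla\Delta u_n\|^2+\|\nabla\theta_n\|^2+2\int_0^t\|\Delta\theta_n\|^2$. Testing with $-\Delta u_n''$ is not useful, since it needs $\Delta^2u_n\in L^2$. At this level your Young absorption does work, because $r>0$ puts $\|\Delta u_n'\|^2$ into the energy. The paper instead lets $\pm\int_0^t\!\int_\Omega\alpha\,\Delta\theta_n\,\Delta u_n'$ cancel between the two equations and only estimates the $\nabla\alpha$ commutator terms, which is the symmetric pairing you mention at the end.

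What your proposal leaves open is why $-\Delta u_n'$ may be used at all. It is not in $W_n$, because $\Delta$ does not map the span of clamped biharmonic eigenfunctions into itself. Even formally, $(\Delta^2u,-\Delta u_t)=\frac12\frac{d}{dt}\|\nabla\Delta u\|^2-\int_\Gamma\partial_\nu\Delta u\,\Delta u_t\,d\Gamma$. This boundary term does not vanish under $u=\partial_\nu u=0$, and it is not even defined at the regularity $u\in H^3$, $u_t\in H^2$.

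The paper disposes of this by multiplying by $-\sqrt{\lambda_m}r_{m,n}'$ and using $\Delta\xi_m=-\sqrt{\lambda_m}\xi_m$, hence $\partial_\nu\Delta u_n=0$ on $\Gamma$. But no nonzero $\xi\in H_0^2(\Omega)$ satisfies $-\Delta\xi=\mu\xi$: its extension by zero solves the same equation in $\mathbb R^2$, so it is real-analytic and vanishes identically. That justification therefore does not hold, and you need a genuine argument for this boundary term. One option is a time-differentiated energy estimate followed by clamped elliptic regularity for $\Delta^2u=F-(I-r\Delta)u_{tt}-\operatorname{div}(\alpha\nabla\theta)$. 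That route, however, needs time regularity of $F,G$ and compatibility of the data beyond what the statement assumes.

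Granted the first-level bounds, your handling of the time derivatives is correct and simpler than the paper's combined $u_n''$/$\theta_n'$ test. The identity $\theta_n'=\mathbb Q_n\big(G+\Delta\theta_n+\operatorname{div}(\alpha\nabla u_n')\big)$ follows directly from \eqref{2b}. Testing \eqref{2a} with $u_n''$, together with $(\Delta u_n,\Delta u_n'')=-(\nabla\Delta u_n,\nabla u_n'')$ (valid since $\partial_\nu u_n''=0$), bounds $u_n''$ in $L^2(0,T;H_0^1(\Omega))$. The ``invert $I-r\Delta$'' variant does not apply verbatim to the projected Galerkin equation.
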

		
		\begin{proof}
			In order to derive higher-order regularity estimates for the weak solutions $(u, \theta)$ of system \eqref{1aa}, we again work with the Galerkin approximated system \eqref{2}. 
			%But, in order to ease the estimations, we will derive this regularity estimate in a formal manner by directly working on system \eqref{1aa}. 
			
            We first multiply the equations \eqref{2a} and \eqref{2b} by $-\sqrt {\lambda_m} r_{m,n}'(t)$ and $\beta_m d_{m,n}(t)$, respectively and take the sum over $m = 1,2,...,n$.. Then integrating both resulting identities over $(0,t),$  
        applying integration by parts and utilizing the  homogeneous boundary conditions \eqref{1c} and initial condition \eqref{1d}, we obtain 			
			\begin{align}\label{cx}
				\frac{1}{2} \int_{\Omega} |\nabla u_n'(t)|^2& dx + \frac{r}{2} \int_{\Omega} |\Delta u_n'(t)|^2 dx + \frac{1}{2} \int_{\Omega} |\nabla\Delta u_n(t)|^2 dx = - \int_0^t\int_{\Omega}  F(x,\tau) \Delta u_n'(\tau) dx d\tau  \nonumber \\
				 &+ \int_0^t\int_{\Omega}  \alpha(x)\Delta \theta_n(\tau) \Delta u_n'(\tau) dx d\tau +\int_0^t\int_{\Omega} \big(\nabla \alpha(x)\cdot\nabla \theta_n(\tau)\big)\Delta u_n'(\tau) dx d\tau \nonumber \\ 
				 &+ \frac{1}{2}\int_{\Omega} |\nabla v_{0,n}|^2 dx +\frac{r}{2}\int_{\Omega} |\Delta v_{0,n}|^2 dx + \frac{1}{2}\int_{\Omega} | \nabla\Delta u_{0,n}|^2dx,
			\end{align}
			\begin{align}\label{cx1}
				 \frac{1}{2} \int_{\Omega} |\nabla \theta_n(t) |^2 dx &+\int_0^t\int_{\Omega} |\Delta \theta_n(\tau)|^2 dx d\tau  =\frac{1}{2}\int_{\Omega}|\nabla \theta_{0,n}|^2 dx -\int_0^t\int_{\Omega} G(\tau)\Delta \theta_n(\tau) dx d\tau   \nonumber \\ 
				& - \int_0^t\int_{\Omega}  \alpha(x) \Delta u_n'(\tau) \Delta \theta_n(\tau) dx d\tau - \int_0^t\int_{\Omega} \big( \nabla \alpha(x)\cdot \nabla u_n'(\tau)\big) \Delta \theta_n(\tau)dx d\tau,
			\end{align} 
			where we used the following conditions on the boundary $\Gamma:$ $$\frac{\partial u_n'}{\partial\nu}=\sum_{m=1}^{n}r'_{m,n}(t)\frac{\partial \xi_m}{\partial\nu}=0, \ \ \frac{\partial \Delta u_n}{\partial\nu}=-\sum_{m=1}^{n} r_{m,n}(t) \lambda_m^{\frac{1}{2}} \frac{\partial \xi_m}{\partial\nu}=0, \ \  \Delta\theta_n=-\sum_{m=1}^{n} d_{m,n}(t)\beta_m \chi_m=0.$$

			Next, we add estimates \eqref{cx} and \eqref{cx1}, apply Young’s inequality, and implement the embedding  $H^2(\Omega)\hookrightarrow L^{\infty}(\Omega)$ along with the estimate $ \Vert \nabla \alpha\Vert^2_{L^{\infty}(\Omega)}\leq \widetilde{C}\,\Vert \nabla \alpha \Vert^2_{H^2(\Omega)}\leq \widetilde{C}\, \Vert \alpha \Vert^2_{H^3(\Omega)}$ to get
			\begin{align*}
				\int_{\Omega} |\nabla &u_n' (t)|^2 dx + r \int_{\Omega} |\Delta u_n'(t)|^2 dx + \int_{\Omega} |\nabla\Delta u_n(t)|^2 dx+ \int_{\Omega} |\nabla \theta_n(t) |^2 dx +2\int_0^t\int_{\Omega} |\Delta \theta_n(\tau)|^2 dx d\tau \nonumber \\
				  \leq& \ 2 \epsilon\int_0^t \int_{\Omega} |\Delta \theta_n(\tau)|^2 dx d\tau + 2 \int_0^t \int_{\Omega} |\Delta u_n'(\tau)|^2 dx d\tau  + \widetilde{C}\, \frac{1}{2}\Vert \alpha \Vert^2_{H^3(\Omega)}\int_0^t\int_{\Omega} |\nabla \theta_n(\tau)|^2 dx d\tau  \nonumber \\
				&+\frac{1}{4\epsilon}\left( \int_0^t\int_{\Omega} |G(\tau)|^2 dx d\tau+ \widetilde{C}\, \Vert \alpha\Vert^2_{H^3(\Omega)}\int_0^t \int_{\Omega}|\nabla u_n'(\tau)|^2 dx d\tau\right)+\frac{1}{2}\int_0^t \int_{\Omega} |F(\tau)|^2 dx d\tau \nonumber \\
				&+\int_{\Omega} |\nabla v_{0,n}|^2 dx   +r\int_{\Omega} |\Delta v_{0,n}|^2 dx+\int_{\Omega} | \nabla\Delta u_{0,n}|^2dx+\int_{\Omega}|\nabla \theta_{0,n}|^2 dx.
			\end{align*}
			Now, we choose $ \epsilon=\frac{1}{2}$ and apply the projection inequalities $\|\nabla v_{0,n}\|^2_{L^2(\Omega)}\leq \|\nabla v_0\|^2_{L^2(\Omega)}$, $\|\Delta  v_{0,n}\|^2_{L^2(\Omega)}\leq \|\Delta v_0\|^2_{L^2(\Omega)}$, $\|\nabla\Delta u_{0,n}\|^2_{L^2(\Omega)}\leq \|\nabla \Delta u_0\|^2_{L^2(\Omega)}$ and $\|\nabla \theta_{0,n}\|^2_{L^2(\Omega)}\leq \|\nabla \theta_0\|^2_{L^2(\Omega)}$ to deduce 
				\begin{align*}
				\int_{\Omega} |\nabla u_n' (t)|^2 dx +& r \int_{\Omega} |\Delta u_n'(t)|^2 dx + \int_{\Omega} |\nabla\Delta u_n(t)|^2 dx+ \int_{\Omega} |\nabla \theta_n(t) |^2 dx +\int_0^t\int_{\Omega} |\Delta \theta_n(\tau)|^2 dx d\tau \nonumber \\
				\leq \  \widetilde{C}\, \frac{1}{2}&\Vert \alpha\Vert^2_{H^3(\Omega)} \left( \int_0^t \int_{\Omega}|\nabla u_n'(\tau)|^2 dx d\tau   +\int_0^t\int_{\Omega} |\nabla \theta_n(\tau)|^2 dx d\tau\right)  \nonumber \\
				&+  2 \int_0^t \int_{\Omega} |\Delta u_n'(\tau)|^2 dx d\tau+R_2(\theta_0,u_0,v_0,F,G,\alpha), \ \ \ \ \ \text{for all } t\in [0,T],
			\end{align*}
            where 
				\begin{align*}
				&R_2(\theta_0,u_0,v_0,F,G,\alpha) := \frac{1}{2}\int_0^T \int_{\Omega} |F|^2 dx d\tau + \frac{1}{2} \int_0^T\int_{\Omega} |G|^2 dx d\tau \nonumber \\
				&\hspace{1cm}+\int_{\Omega} |\nabla v_0|^2 dx   +r\int_{\Omega} |\Delta v_0|^2 dx+\int_{\Omega} | \nabla\Delta u_0|^2dx+\int_{\Omega}|\nabla \theta_0|^2 dx.
			\end{align*}
			Next, applying Gr\"onwall's inequality, we arrive at the following regularized energy estimate on the solution of the approximated Galerkin system:
			\begin{align}\label{S-RW1}
				\int_{\Omega} |\nabla u_n' (t)|^2 dx +& r \int_{\Omega} |\Delta u_n'(t)|^2 dx + \int_{\Omega} |\nabla\Delta u_n(t)|^2 dx+ \int_{\Omega} |\nabla \theta_n(t) |^2 dx +\int_0^t\int_{\Omega} |\Delta \theta_n(\tau)|^2 dx d\tau \nonumber \\
				&\leq K(\alpha,r,\widetilde{C},T) \ R_2(\theta_0,u_0,v_0,F,G,\alpha),\ \ \ \ \ \text{for all } t\in[0,T].
			\end{align}
	where $K(\alpha,r,\widetilde{C},T):=\exp\left\{\max\left( \frac{\widetilde{C}}{2}\Vert \alpha\Vert^2_{H^3(\Omega)},\frac{2}{r}\right)T\right\}.$ Since, in the above inequality, the right-hand side is independent of $n$, we can extract subsequences of $(u_n,\theta_n)$ and $u'_n$ (again represented as the same)  converging weakly to the functions $(u,\theta)\in L^\infty(0,T;\mathcal{V}^3(\Omega))\times L^2(0,T;\mathcal{V}^2(\Omega)), \ \mbox{and} \ u_{t}\in L^\infty(0,T;H_0^2(\Omega)),$ respectively, through the equality of norms \eqref{8} and \eqref{88}. 
			
			Finally, by using the weak-sequential lower semicontinuity  in equation \eqref{S-RW1}, we get the following norm estimates for $u', u$ and $\theta$:			
			\begin{eqnarray}
				\max_{t\in[0,T]}\Vert \Delta u'(t)\Vert^2_{L^2(\Omega)}&\leq& \frac{1}{r} K(\alpha,r,\widetilde{C},T)  \ R_2(\theta_0,u_0,v_0,F,G,\alpha), \label{cz} \\
				%\Vert \Delta u' \Vert^2_{L^2(0,T;L^2(\Omega))} &\leq& \exp(max\left\{ C \frac{1}{2}\Vert \alpha\Vert^2_{H^3(\Omega)},\frac{2}{r}\right\} \ R_2(\theta_0,u_0,v_0,F,G,\alpha) \\
				\max_{t\in[0,T]}\Vert \nabla\Delta u(t)\Vert^2_{L^2(\Omega))}&\leq& K(\alpha,r,\widetilde{C},T) \ R_2(\theta_0,u_0,v_0,F,G,\alpha), \nonumber  \\
				\Vert \Delta \theta \Vert^2_{L^2(0,T;L^2(\Omega))}&\leq& K(\alpha,r,\widetilde{C},T) \ R_2(\theta_0,u_0,v_0,F,G,\alpha)\nonumber.
			\end{eqnarray}
			
Further, to establish the regularity property of $ u_{tt}\in L^2(0,T;H_0^1(\Omega))$ and $\theta_{t}\in L^2(0,T;L^2(\Omega)),$ we multiply \eqref{2a} by \( r_{m,n}''(t) \) and  \eqref{2b} by \( d_{m,n}'(t) \), summing and integrating over $(0,t)$ to obtain 
			\begin{eqnarray}\label{fs}
				\lefteqn{\int_0^t \int_{\Omega} |u_n''(\tau)|^2 dx d\tau+r \int_0^t\int_{\Omega} |\nabla u_n''(\tau)|^2 dx d\tau= \int_0^t \int_{\Omega} F(x,\tau) u_n'' (\tau) dx d\tau +\int_0^t \int_{\Omega} |\Delta u_n'(\tau)|^2 dx d\tau} \nonumber \\ &&+\int_{\Omega} \Delta u_{0,n}(x)\Delta v_{0,n}(x) dx-\int_{\Omega} \Delta u_n(t)\ \Delta u_n'(t) dx+\int_0^t \int_{\Omega} \alpha(x) \nabla \theta_n(\tau)\cdot\nabla u_n'' (\tau) dx d\tau, \hspace{1in}
			\end{eqnarray}
			and
			\begin{eqnarray}\label{fs1}
				\lefteqn{\int_0^t\int_{\Omega} |\theta_n' (\tau)|^2 dx d\tau +\frac{1}{2}\int_{\Omega} |\nabla \theta_n(t)|^2 dx= \int_0^t\int_{\Omega} G(x,\tau) \theta_n' (\tau) dx d\tau}  \\ &&-\int_0^t\int_{\Omega} \alpha(x) \nabla u_n'' (\tau)\cdot \nabla \theta_n(\tau) dx d\tau+\int_{\Omega}\alpha(x) \nabla u_n' (t) \cdot \nabla \theta_n(t) dx -\int_{\Omega} \alpha(x)\nabla v_{0,n}(x)\cdot \nabla \theta_{0,n}(x) dx, \nonumber
			\end{eqnarray}
			where we also used the condition $ \frac{\partial u_n''}{\partial\nu}=\sum_{m=1}^{n}r''_{m,n}(t)\frac{\partial \xi}{\partial\nu}=0$.
			By adding the identities \eqref{fs} and \eqref{fs1}, and using Young's inequality, we find
			\begin{eqnarray}
				\lefteqn{\int_0^t \int_{\Omega} |u_n''(\tau)|^2 dx d\tau+ r\int_0^t\int_{\Omega} |\nabla u_n''(\tau)|^2 dx d\tau+ \int_0^t\int_{\Omega} |\theta_n'(\tau)|^2 dx d\tau+\frac{1}{2}\int_{\Omega} |\nabla \theta_n(t)|^2 dx} \nonumber\\
				& \leq & \frac{1}{2} \int_0^t \int_{\Omega} |F(x,\tau)|^2 dx d\tau + \frac{1}{2}\int_0^t \int_{\Omega} |u_n''(\tau)|^2 dx d\tau+\int_0^t \int_{\Omega} |\Delta u_n'(\tau)|^2 dx d\tau+\frac{1}{2}\int_{\Omega} |\Delta u_{0,n}|^2 dx\nonumber\\
				&&  + \frac{1}{2}\int_{\Omega} |\Delta v_{0,n}|^2dx +\frac{1}{2}\int_{\Omega}|\Delta u_n(t)|^2 dx +\frac{1}{2}\int_{\Omega} |\Delta u_n'(t)|^2 dx  +\frac{1}{2} \int_0^t \int_{\Omega} |G(x,\tau)|^2 dx d\tau \nonumber \\ 
				&& + \frac{1}{2} \int_0^t \int_{\Omega} |\theta_n'(\tau)|^2 dx d\tau+\frac{1}{2}
				\Vert \alpha\Vert^2_{L^{\infty}(\Omega)} \int_{\Omega} |\nabla u_n'(t)|^2 dx + \frac{1}{2}\int_{\Omega} |\nabla \theta_n(t)|^2 dx \nonumber \\
				&& +\frac{1}{2}
				\Vert \alpha\Vert^2_{L^{\infty}(\Omega)} \int_{\Omega}|\nabla v_{0,n}|^2 dx+ \frac{1}{2}\int_{\Omega}|\nabla \theta_{0,n}|^2 dx.  \label{es}
			\end{eqnarray}
			Finally, using the inequality \eqref{S-EI},   and the estimates  \eqref{mvc1}, \eqref{7.13}, \eqref{S-RW1} and  \eqref{cz} in \eqref{es}, we get
				\begin{eqnarray*}
				\lefteqn{\frac{1}{2}\int_0^t \int_{\Omega} |u_n''(\tau)|^2 dx d\tau+ r\int_0^t\int_{\Omega} |\nabla u_n''(\tau)|^2 dx d\tau+ \frac{1}{2}\int_0^t\int_{\Omega} |\theta_n'(\tau)|^2 dx d\tau} \nonumber\\
				&&  \leq \frac{1}{2} \int_0^t \int_{\Omega} |F(x,\tau)|^2 dx d\tau +\frac{1}{2} \int_0^t \int_{\Omega} |G(x,\tau)|^2 dx d\tau\nonumber\\
				&&  +\frac{1}{2}\int_{\Omega} |\Delta u_{0,n}|^2 dx+ \frac{1}{2}\int_{\Omega} |\Delta v_{0,n}|^2dx +\frac{1}{2}\Vert \alpha\Vert^2_{L^{\infty}(\Omega)} \int_{\Omega}|\nabla v_{0,n}|^2 dx+ \frac{1}{2}\int_{\Omega}|\nabla \theta_{0,n}|^2 dx \nonumber\\
				&&+\underbrace{\left(T+\frac{1}{2}\right) \frac{1}{r} K(\alpha,r,\widetilde{C},T) \ R_2(\theta_0,u_0,v_0,F,G,\alpha)}_{:=K_1} +\underbrace{\frac{1}{2} \left(1+\frac{1}{r}\widetilde{C}\,
				\Vert \alpha\Vert^2_{H^2(\Omega)} \right) \exp(T)  R_1 (\theta_0, u_0, v_0, F,G)}_{:=K_2}.
			\end{eqnarray*}
			Next, by implementing the following projection inequalities: 
            \begin{eqnarray*}
            \|\nabla v_{0,n}\|^2_{L^2(\Omega)} &\leq& \|\nabla v_0\|^2_{L^2(\Omega)}, \|\Delta  v_{0,n}\|^2_{L^2(\Omega)}\leq \|\Delta v_0\|^2_{L^2(\Omega)}, \\
            \|\nabla\Delta u_{0,n}\|^2_{L^2(\Omega)}&\leq& \|\nabla \Delta u_0\|^2_{L^2(\Omega)} \ \ \
            \text{and} \ \ \|\nabla \theta_{0,n}\|^2_{L^2(\Omega)}\leq \|\nabla \theta_0\|^2_{L^2(\Omega)},    
            \end{eqnarray*}
            and using the weak sequential lower semi-continuity, we find that 
			there exist subsequences of $u_n''$ and $\theta_n'$ that converge weakly to the functions
			$u_{tt}\in L^{2}(0,T; H_0^1(\Omega)), \ \theta_t \in L^2(0,T;L^2(\Omega)),$
			respectively. Moreover, the following estimate holds:
			\begin{align*}\label{lk}
				\int_0^T \int_{\Omega} |u''(\tau)|^2 dx d\tau&+r \int_0^T\int_{\Omega} |\nabla u''(\tau)|^2 dx d\tau +\int_0^T\int_{\Omega} \theta'(\tau)^2 dx d\tau \nonumber\\
                &\leq R_3(F,G,\theta_0,u_0,v_0,\alpha) +K_1+K_2,
	\end{align*}
			where 
			\begin{align*}
				R_3(F,G,\theta_0,u_0,v_0,\alpha):= & \frac{1}{2} \int_0^T \int_{\Omega} |F(x,\tau)|^2 dx d\tau +\frac{1}{2} \int_0^T \int_{\Omega} |G(x,\tau)|^2 dx d\tau  +\frac{1}{2}\int_{\Omega} |\Delta u_0|^2 dx\nonumber\\
				&  + \frac{1}{2}\int_{\Omega} |\Delta v_0|^2dx+\frac{1}{2}
				\Vert \alpha\Vert^2_{L^{\infty}(\Omega)} \int_{\Omega}|\nabla v_0|^2 dx+ \frac{1}{2}\int_{\Omega}|\nabla \theta_0|^2 dx.
			\end{align*}
			This completes the proof.
		\end{proof}

\section{The inverse  coefficient problem}\label{S-IP}
		This section addresses the inverse coefficient problem, which constitutes the central objective of the present study. We begin by formulating the identification of the unknown thermal expansion coefficient as a constrained optimization problem using the Tikhonov regularization framework. Subsequently, we analyze the associated input-output operator and establish its compactness and Lipschitz continuity properties. Finally, we prove the existence of a minimizer for the regularized objective functional. 	
		
		For a given constant $\kappa>0$, consider the following admissible set of coefficients: 
		$$\mathcal{M}=\{ \alpha \in H^1(\Omega):\Vert \alpha \Vert_{H^1(\Omega)}\leq \kappa\}.$$ 
		We define the input-output operator as follows:
		\begin{eqnarray*}
			\Phi: \mathcal{M}\subset H^1(\Omega)\mapsto H_0^1(\Omega)\subset L^2(\Omega),
			\ \ \Phi(\alpha)(x):=u(x,t;\alpha)|_{t=T}.
		\end{eqnarray*}
		We shall write the inverse problem in terms of functional equation 
		\begin{eqnarray}\label{l1}
			\Phi \alpha=u_T, \alpha \in \mathcal{M}, \ u_T \in L^2(\Omega).
		\end{eqnarray} 
		Since the measured data $ u_T(x) $ contain measurement errors, achieving exact equality in the functional equation \eqref{l1} is not feasible in practice. Therefore, we reformulate the inverse problem as a minimization problem using the Tikhonov functional \begin{eqnarray*}
			J(\alpha)=\frac{1}{2}\Vert \Phi(\alpha)-u_T\Vert^2_{L^2(\Omega)}.
		\end{eqnarray*}
		The regularized form of Tikhonov functional is 
		\begin{eqnarray}\label{l12}
			J_{\lambda}(\alpha)=\frac{1}{2}\Vert \Phi(\alpha)-u_T\Vert^2_{L^2(\Omega)}+\frac{\lambda}{2}\Vert \nabla \alpha\Vert^2_{L^2(\Omega)}.
		\end{eqnarray}
		where $\lambda>0$ is the parameter of regularization, which is a constant. We now formulate the inverse problem as a minimization problem for the functional $J_{\lambda}(\alpha)$ over the set $ \mathcal{M}.$ Specifically, our goal is to solve the following:
        
		\textbf{Minimization Problem: } Find $ \alpha \in \mathcal{M}$ that minimizes the functional $ J_{\lambda}(\alpha),$ that is, \begin{eqnarray*}\label{m1}
			\min_{\alpha \in \mathcal{M}} J_{\lambda}(\alpha),
		\end{eqnarray*}
		subject to the condition that $ u(x,t;\alpha)$ solves the problem \eqref{1a}-\eqref{1d}.
		\begin{remark}
			The nonlinear nature of the inverse coefficient problem of recovering $\alpha$ in \eqref{1a}-\eqref{1d} necessitate the choice of more regularized functional \eqref{l12} with regularization term $\frac{\lambda}{2}\Vert \nabla \alpha\Vert^2_{L^2(\Omega)}$ instead of usual $ L^2$ norm regularizer $ \frac{\lambda}{2} \Vert  \alpha\Vert^2_{L^2(\Omega)}$ for the existence of solutions for the inverse problem. Furthermore, since the inverse coefficient problem of determining $\alpha$ via the  minimization problem defined above is non-convex, the uniqueness of the solution to the inverse problem cannot, in general, be guaranteed. 
		\end{remark}
		\subsection{Compactness and Lipschitz continuity of the input output operator}
		In this subsection, we establish the compactness and Lipschitz continuity of the input–output operator associated with the inverse problem. The compactness result follows from the a priori estimates obtained for weak solutions of the direct problem. In contrast, the nonlinear dependence of the state variables on the unknown coefficient introduces additional analytical difficulties in proving Lipschitz continuity of the input–output mapping.
		
		\begin{proposition}
			Suppose  Assumption \ref{a2} holds. Then the input-output operator $ \Phi: \mathcal{M}\subset H^1(\Omega)\mapsto  L^2(\Omega)$
			defined by $\Phi(\alpha)(x):=u(x,t;\alpha)|_{t=T}$ is compact.    
		\end{proposition}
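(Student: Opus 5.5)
The plan is to show that $\Phi$ maps the bounded set $\mathcal{M}$ into a bounded subset of $H_0^1(\Omega)$ (indeed of $H_0^2(\Omega)$). Compactness in $L^2(\Omega)$ then follows from the Rellich--Kondrachov theorem, since the embedding $H_0^1(\Omega)\hookrightarrow L^2(\Omega)$ is compact on the bounded smooth domain $\Omega$. The boundedness will come from the energy estimates of Theorem \ref{t1}. The key observation is that those constants depend on $\alpha$ only through the upper bound $\alpha_1$ in Assumption \ref{a2}, and in the basic estimate (before \eqref{du1}) not even through $\alpha_1$: the coupling terms $\int\alpha\nabla\theta_n\cdot\nabla u_n'$ cancel exactly when \eqref{6} and \eqref{5} are added.

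\textbf{Step 1.} Fix $\alpha\in\mathcal{M}$ satisfying Assumption \ref{a2}, and let $u(\cdot;\alpha)$ be the weak solution from Theorem \ref{t1}. From the Galerkin estimates \eqref{mvc1} and \eqref{7.13}, passed to the limit by weak lower semicontinuity, we have for a.e. $t$
\[
\|\nabla u_t(t)\|_{L^2(\Omega)}^2\le \tfrac{1}{r}\exp(T)R_1,\qquad \|\Delta u(t)\|_{L^2(\Omega)}^2\le \exp(T)R_1 .
\]
Here $R_1=R_1(\theta_0,u_0,v_0,F,G)$ is independent of $\alpha$.

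\textbf{Step 2.} Pass to the pointwise value at $t=T$. By Remark \ref{r1}, $u\in C([0,T];H_0^1(\Omega))$, and
\[
u(T)=u_0+\int_0^T u_t(\tau)\,d\tau .
\]
Hence
\[
\|\nabla u(T)\|_{L^2(\Omega)}\le \|\nabla u_0\|_{L^2(\Omega)}+T\Big(\tfrac{1}{r}\exp(T)R_1\Big)^{1/2}=:M .
\]
The bound $M$ is uniform over $\alpha\in\mathcal{M}$. Alternatively, since $u\in L^\infty(0,T;H_0^2)$ and $u\in C([0,T];H_0^1)$, the map $t\mapsto u(t)$ is weakly continuous into $H_0^2$. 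Together with \eqref{8}, this gives the stronger bound $\|u(T)\|_{H^2}^2\le C_1\exp(T)R_1$, though the $H^1$ bound already suffices.

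\textbf{Step 3.} Let $\{\alpha_k\}\subset\mathcal{M}$ be any bounded sequence. Then $\{\Phi(\alpha_k)\}$ is bounded in $H_0^1(\Omega)$ by $M$ (using \eqref{8.1}). By Rellich--Kondrachov, it has a subsequence converging strongly in $L^2(\Omega)$. So $\Phi$ maps bounded sets to relatively compact sets, which is the required compactness.

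\textbf{Main obstacle.} The delicate point is justifying that the bounds hold \emph{at the single time} $t=T$ and \emph{uniformly in} $\alpha$. Uniformity follows from the $\alpha$-free constants once the cancellation of the coupling terms is noted. For the time trace, I would use the $C([0,T];H_0^1)$ regularity of Remark \ref{r1} (coming from $u,u_t\in L^2(0,T;H_0^1)$) rather than an a.e.-in-time bound. If the reader wants compactness also to include continuity, that is left to the subsequent Lipschitz result.
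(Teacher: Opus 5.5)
Your proposal is correct and takes the same route as the paper. Both arguments get a uniform $H_0^1(\Omega)$ bound on $u(\cdot,T;\alpha)$ from the energy estimates of Theorem \ref{t1} together with Remark \ref{r1}, then conclude by the compact embedding $H_0^1(\Omega)\hookrightarrow L^2(\Omega)$. Your write-up is more careful than the paper's on two points: you make the trace at $t=T$ explicit through $u(T)=u_0+\int_0^T u_t\,d\tau$, and you note that the bounds \eqref{mvc1}, \eqref{7.13} do not depend on $\alpha$ (the paper instead cites \eqref{ke}, whose constant involves $\alpha_1$ but is still uniform under Assumption \ref{a2}).
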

		\begin{proof}
			Let the input output operator $ \Phi$ map the bounded sequence $ \{\alpha_m\}$ in $ H^1(\Omega)$ to the output sequence  $u(x,t;\alpha_m)$ at $t=T$ denoted by $ u_{Tm}.$ To prove the operator's compactness, we must show that the output sequence $ u_{Tm}$ is bounded in $ H_0^1(\Omega). $ Using the priori estimates \eqref{ke}, \eqref{5.4}, and Remark \ref{r1}, we get $ u_{Tm}\in C([0,T];H_0^1(\Omega)). $ This implies that $ u_{Tm}$ is bounded in $ H_0^1(\Omega). $ Since $ H_0^1(\Omega)$ is compactly embedded in $ L^2(\Omega),$ the sequence $ u_{Tm}$ is precompact in $ L^2(\Omega),$ that is, the operator $\Phi$ maps a bounded sequence to a precompact sequence. Hence $ \Phi$ is compact.
		\end{proof}
        Next, we demonstrate that the input-output operator $\Phi$ is Lipschitz continuous. As noted earlier, the nonlinear nature of the inverse problem significantly complicates this analysis. In the case of a linear inverse problem, such as source or boundary identification, we can directly apply the a priori estimates of the solution to the direct problem. However, in our current nonlinear setting, such an approach is insufficient. To proceed, we introduce a new admissible set of coefficients, denoted by $$\mathcal{M}_1=\{ \alpha \in H^2(\Omega):  \ \Vert \alpha \Vert_{H^2(\Omega)}\leq \kappa_1,\ \kappa_1>0\},$$ and define the input-output operator $\Phi$ on this set. Within this framework, we  establish independent estimates for the solution, $(\delta u(x,t),\delta \theta(x,t))=(u(x,t;\alpha_1)-u(x,t;\alpha_2),\theta(x,t;\alpha_1)-\theta(x,t;\alpha_2))$ of the following problem

	\begin{subnumcases}{\label{2.1}}
		\delta u_{tt} - r\,\Delta \delta u_{tt} + \Delta^{2}\delta u
		+ \operatorname{div}\!\big(\alpha_{1}(x)\nabla \delta\theta\big)
		= -\operatorname{div}\!\big(\delta\alpha(x)\nabla \theta(x,t;\alpha_{2})\big),
		\quad (x,t)\in \Omega_T, \label{2.1a} \\		
		\delta\theta_{t} - \Delta\delta\theta
		- \operatorname{div}\!\big(\alpha_{1}(x)\nabla \delta u_{t}\big)
		= \operatorname{div}\!\big(\delta\alpha(x)\nabla u_{t}(x,t;\alpha_{2})\big),
		\quad \quad \quad \quad \qquad \ \, (x,t)\in \Omega_T, \label{2.1b} \\	
		\delta u = \frac{\partial \delta u}{\partial\nu} = 0,\quad
		\delta\theta = 0,
		\qquad\qquad\qquad\quad\qquad\qquad \qquad \qquad \quad \,(x,t)\in \Gamma_T, \nonumber \\		
		\delta u(x,0) = 0,\quad
		\delta u_{t}(x,0) = 0,\quad
		\delta\theta(x,0) = 0,
		\qquad\qquad\qquad\qquad\quad\ \quad  x\in \Omega \nonumber.
	\end{subnumcases}	
%		\begin{subnumcases}{}\label{2.1}
%			&		$ \delta u_{tt}-r\Delta \delta u_{tt}+\Delta ^2 \delta u+ \mbox{div}(\alpha_1(x)\nabla \delta \theta)$ \nonumber \\&$ \; \qquad \qquad \qquad = -\mbox{div}(\delta\alpha(x)\nabla \theta(x,t;\alpha_2)), \;\quad \quad \quad \quad\qquad \quad (x,t)\in \Omega_T,$   \label{2.1a} \\ [1pt]
%			&$\delta\theta_{t}- \Delta\delta\theta -\mbox{div}( \alpha_1(x) \nabla \delta u_t)= \mbox{div}(\delta \alpha(x) \nabla u_t(x,t;\alpha_2)),\ \ \ \ (x,t)\in \Omega_T,$
%			\label{2.1b} \\ [1pt]
%			&$ \delta u=\frac{\partial \delta u}{\partial\nu}=0, \ \delta\theta =0 \qquad \qquad\; \qquad \qquad \qquad \qquad \qquad \qquad (x,t)\in \Gamma_T,$ \nonumber  \\
%			&	 $	\delta u(x,0) = 0, \delta u_{t}(x,0)  = 0, \; \delta\theta(x,0)= 0 \qquad \ \qquad \qquad \qquad \qquad x  \in \Omega,$ \nonumber
%		\end{subnumcases}
		where $\delta\alpha(x)=\alpha_1(x)-\alpha_2(x),$ $ (u(x,t;\alpha_1),\theta(x,t;\alpha_1))$ and $(u(x,t;\alpha_2),\theta(x,t;\alpha_2))$ are the solutions of the direct problem \eqref{1a}-\eqref{1d} corresponding to the thermal expansion coefficients $\alpha_1,$ $\alpha_2\in \mathcal{M}_1 \subset \mathcal{M}$ respectively, for common initial and boundary data.  
		
		\begin{proposition}
			Suppose Assumption \ref{a2} holds true. Then the input-output operator $\Phi: \mathcal{M}_1\subset H^2(\Omega)\to L^2(\Omega)$ defined by $\Phi(\alpha)(x):=u(x,t;\alpha)|_{t=T}$ is Lipschitz continuous, that is
			\begin{eqnarray*}
				\Vert \Phi(\alpha_1)-\Phi(\alpha_2)\Vert_{L^2(\Omega)}\leq T \, \sqrt{L_0} \, \Vert \alpha_1-\alpha_2 \Vert _{H^2(\Omega)}, \ \forall \ \alpha_1,\alpha_2 \in \mathcal{M}_1,
			\end{eqnarray*} 
			where the Lipschitz constant $ L_0=\exp(T/r) \left( \frac{T}{r}+ \frac{1}{2}\right)R_1(\theta_0, u_0, v_0, F,G)\   \widetilde{C}$,
			in which the constants $\widetilde{C}$ and $R_1$ are coming from \eqref{S-EI} and \eqref{r1q}, respectively.

		\end{proposition}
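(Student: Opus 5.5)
The estimate will come from an energy argument applied to the difference system \eqref{2.1}. The final bound has the shape $T\sqrt{L_0}\,\|\delta\alpha\|_{H^2}$, and $L_0$ contains $\exp(T/r)$ and $R_1$. This suggests first bounding $\|\nabla\delta u_t\|$ (the $r$-weighted quantity) and then writing
\[
\delta u(T)=\int_0^T\delta u_t(\tau)\,d\tau ,
\]
so that
\[
\|\Phi(\alpha_1)-\Phi(\alpha_2)\|_{L^2(\Omega)}\le T\max_{t}\|\delta u_t(t)\|_{L^2(\Omega)}
\]
(or use Cauchy--Schwarz in time to get $\sqrt T$ times an $L^2(0,T)$ norm). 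The factor $T$ thus comes from integrating in time, and the remaining task is to bound $\max_t\|\delta u_t(t)\|^2$ by $L_0\|\delta\alpha\|^2_{H^2}$.

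\textbf{Energy identity.} Work at the Galerkin level, or formally with the regularity of Theorem \ref{t1}. Test \eqref{2.1a} with $\delta u_t$ and \eqref{2.1b} with $\delta\theta$, integrate by parts and add. The coupling terms $-(\alpha_1\nabla\delta\theta,\nabla\delta u_t)$ and $+(\alpha_1\nabla\delta u_t,\nabla\delta\theta)$ cancel exactly, as in the proof of Theorem \ref{t1}. With zero initial data this gives
\[
\tfrac12\|\delta u_t(t)\|^2+\tfrac r2\|\nabla\delta u_t(t)\|^2+\tfrac12\|\Delta\delta u(t)\|^2+\tfrac12\|\delta\theta(t)\|^2+\int_0^t\|\nabla\delta\theta\|^2
\]
\[
=\int_0^t(\delta\alpha\nabla\theta_2,\nabla\delta u_t)\,d\tau-\int_0^t(\delta\alpha\nabla u_{2,t},\nabla\delta\theta)\,d\tau ,
\]
where $\theta_2=\theta(\cdot,\cdot;\alpha_2)$ and $u_2=u(\cdot,\cdot;\alpha_2)$.

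\textbf{Bounding the right-hand side.} Use $\|\delta\alpha\|_{L^\infty}^2\le\widetilde C\|\delta\alpha\|_{H^2}^2$ from \eqref{S-EI}; this is why the admissible set is $\mathcal M_1\subset H^2$ and why $\widetilde C$ appears in $L_0$.
\begin{itemize}
\item For the second term, Young's inequality gives $\tfrac12\int_0^t\|\nabla\delta\theta\|^2+\tfrac12\widetilde C\|\delta\alpha\|^2_{H^2}\int_0^T\|\nabla u_{2,t}\|^2$. The first piece is absorbed on the left. By \eqref{mvc1}, passed to the limit, $\int_0^T\|\nabla u_{2,t}\|^2\le \tfrac{T}{r}\exp(T)R_1$.
\item For the first term, Young's inequality gives $\tfrac12\widetilde C\|\delta\alpha\|^2_{H^2}\int_0^T\|\nabla\theta_2\|^2+\tfrac12\int_0^t\|\nabla\delta u_t\|^2$. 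By \eqref{7.11}, $\int_0^T\|\nabla\theta_2\|^2\le \tfrac12\exp(T)R_1$. The remaining piece is controlled through the $\tfrac r2\|\nabla\delta u_t\|^2$ term on the left.
\end{itemize}

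\textbf{Gronwall and conclusion.} Applying Gronwall's inequality to
\[
y(t)=\|\delta u_t\|^2+r\|\nabla\delta u_t\|^2+\cdots,
\]
with the linear term $\int_0^t\|\nabla\delta u_t\|^2\le \tfrac1r\int_0^t y$, produces the factor $\exp(T/r)$. The bound then reads
\[
\max_t\|\delta u_t(t)\|^2\le \exp(T/r)\Big(\tfrac Tr+\tfrac12\Big)\widetilde C R_1\|\delta\alpha\|^2_{H^2},
\]
up to how the $\exp(T)$ factors from \eqref{7.11} and \eqref{mvc1} are bookkept (the paper's constant seems to absorb or drop them). Combining this with $\|\delta u(T)\|\le T\max_t\|\delta u_t\|$ yields the claim.

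\textbf{Main obstacle.} The delicate point is justifying the energy identity: the right-hand sides $\operatorname{div}(\delta\alpha\nabla\theta_2)$ and $\operatorname{div}(\delta\alpha\nabla u_{2,t})$ lie only in $L^2(0,T;H^{-1})$, and the pairing with $\delta u_t\in L^2(0,T;H^1_0)$ must be made rigorous. I would do this at the Galerkin level for the difference system, using Theorem \ref{t1} for the existence and uniqueness of $\delta u,\delta\theta$ and weak lower semicontinuity to pass to the limit. A second delicate point is tracking the constants so that they match $L_0$ exactly.
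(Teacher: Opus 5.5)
Your proposal is correct and matches the paper's proof step by step: the same energy identity for \eqref{2.1} with the $\alpha_1$-coupling terms cancelling, the same Cauchy/Young splitting using $\|\delta\alpha\|_{L^\infty(\Omega)}^2\le\widetilde{C}\|\delta\alpha\|_{H^2(\Omega)}^2$, the bounds \eqref{mvc1} and \eqref{7.11}, Gronwall producing $\exp(T/r)$, and time integration of $\delta u_t$. The paper writes $\|\delta u(T)\|^2\le T\|\delta u_t\|^2_{L^2(0,T;L^2(\Omega))}$ rather than your $T\max_t$ bound, but both give the same factor $T$.

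Your suspicion about the constant is right. Both \eqref{mvc1} and \eqref{7.11} carry a factor $\exp(T)$, so this argument (yours and the paper's alike) actually gives $T\sqrt{\exp(T)L_0}$ in place of $T\sqrt{L_0}$. The stated $L_0$ therefore reflects a bookkeeping slip in the paper, not a defect of your approach. Your Galerkin-level justification of the energy identity is also more careful than the paper's formal computation.
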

		\begin{proof}
			From the equality $\delta u(T)= \delta u(0)+\int_0^T\delta u_t(\tau)d\tau$, we can establish the inequality $ \Vert \delta u(\cdot,T)\Vert_{L^2(\Omega)}\leq \sqrt{T} \ \Vert \delta u_t\Vert_{L^2(0,T;L^2(\Omega))}$. Now, from the definition of the operator $\Phi$ and this inequality, we can obtain
			\begin{eqnarray}\label{l1234}
				\Vert \Phi(\alpha_1)-\Phi(\alpha_2)\Vert^2_{L^2(\Omega)}=\Vert \delta u(\cdot,T)\Vert^2_{L^2(\Omega)}\leq T \Vert \delta u_t\Vert^2_{L^2(0,T;L^2(\Omega))}.
			\end{eqnarray} 
			To obtain the estimate for $\Vert \delta u_t\Vert^2_{L^2(0,T;L^2(\Omega))},$ we multiply  \eqref{2.1a} by $ \delta u_t$ and \eqref{2.1b} by $\delta \theta,$  perform integration by parts over $ \Omega \times [0,t],$ to obtain
			\begin{eqnarray}
				\lefteqn{\frac{1}{2}\int_{\Omega} |\delta u_t(t)|^2 dx+\frac{r}{2}\int_{\Omega}|\nabla \delta u_t(t)|^2 dx+\frac{1}{2}\int_{\Omega} |\Delta \delta u(t)|^2 dx} \nonumber \\ [1pt]&&-\int_0^t \int_{\Omega} \alpha_1(x)\nabla \delta\theta\cdot\nabla \delta u_\tau dx d\tau=\int_0^t \int_{\Omega} \delta \alpha(x) \nabla \theta(x,t;\alpha_2) \cdot\nabla \delta u_{\tau} \ dx d\tau. \label{p21} \\ [1pt]
				\lefteqn{\frac{1}{2} \int_{\Omega} \delta \theta(t) ^2 dx+ \int_0^t \int_{\Omega}|\nabla \delta \theta |^2 dx d\tau+\int_0^t \int_{\Omega} \alpha_1(x)\nabla \delta u_{\tau}\cdot \nabla \delta \theta \ dx d\tau   }\nonumber \\ [1pt] &&= -\int_0^t \int_{\Omega} \delta \alpha(x) \nabla u_{\tau}(x,\tau;\alpha_2)\cdot \nabla \delta \theta \ dx d\tau. \label{p21.1}
			\end{eqnarray}
			Adding \eqref{p21}, \eqref{p21.1} and applying Cauchy's inequality, we get
			\begin{eqnarray*}
				\lefteqn{\int_{\Omega} \left|\delta u_t(t)\right|^2 dx+r\int_{\Omega}|\nabla \delta u_t(t)|^2 dx+\int_{\Omega} |\Delta \delta u(t)|^2 dx+\int_{\Omega} |\delta \theta(t)|^2 dx}\nonumber \\ [1pt]&&+2\int_0^t\int_{\Omega}|\nabla \delta \theta|^2 dx d\tau \leq \Vert \delta \alpha \Vert^2_{L^{\infty}(\Omega)}\left(\int_0^t \int_{\Omega}| \nabla u_{\tau}(x,\tau;\alpha_2)|^2\ dx d\tau+\int_0^t \int_{\Omega} |\nabla \theta(x,\tau;\alpha_2)|^2 dx d\tau\right)\nonumber \\ [1pt]&&+ \int_0^t \int_{\Omega} |\nabla \delta\theta|^2 dx d\tau  + \int_0^t \int_{\Omega} | \nabla \delta u_{\tau}|^2 dx d\tau.\label{582}
			\end{eqnarray*}
			By invoking Gronwall's inequality, estimates \eqref{mvc1}, \eqref{7.11}, and the inequality \eqref{S-EI}, we derive
				\begin{eqnarray}
				\lefteqn{\int_{\Omega} \left|\delta u_t(t)\right|^2 dx+r\int_{\Omega}|\nabla \delta u_t(t)|^2 dx+\int_{\Omega} |\Delta \delta u(t)|^2 dx+\int_{\Omega} |\delta \theta(t)|^2 dx}\nonumber \\ [1pt]&&\hspace{1cm}+\int_0^t\int_{\Omega}|\nabla \delta \theta|^2 dx d\tau \leq \exp(T/r) \left( \frac{T}{r}+ \frac{1}{2}\right)R_1(\theta_0, u_0, v_0, F,G)\   \widetilde{C}\,\Vert \delta \alpha  \Vert^2_{H^2(\Omega)}.\label{581}
			\end{eqnarray}
			%\begin{eqnarray}
			%	\int_{\Omega}|\nabla \delta u_t(t)|^2 dx \leq \exp(T/r) \left( \frac{T}{r}+ \frac{1}{2}\right)R_1 \Vert \delta \alpha  \Vert^2_{L^{\infty}(\Omega)}.\label{581}
			%end{eqnarray}
			%Using \eqref{581} in \eqref{582} and applying the embedding $ H^2(\Omega)\hookrightarrow L^{\infty}(\Omega),$ we further get 
			%\begin{eqnarray}
			%	\int_0^T \int_{\Omega} \delta u_t^2 dx dt &\leq& TC \exp(T/r)\left(C_e+1\right)R_1 \left(1+\frac{T}{r}\right)\Vert \delta \alpha  \Vert^2_{H^2(\Omega)},\label{l123} \\
			%	\int_0^T \int_{\Omega} |\nabla\delta \theta|^2 dx dt &\leq& C\exp(T/r)\left(C_e+1\right)R_1 \left(1+\frac{T}{r}\right)\Vert \delta \alpha  \Vert^2_{H^2(\Omega)}.\label{l1231}
			%\end{eqnarray}
			By substituting \eqref{581} into \eqref{l1234}, we deduce the Lipschitz continuity of the input-output operator.
		\end{proof}
\begin{remark}
	The Lipschitz continuity of the input-output operator $ \Phi$ shows that the forward problem is stable with respect to the perturbation of the coefficient $\alpha$,	while	the compactness of this operator leads to the ill-posedness of the coefficient inverse problem \eqref{1a}-\eqref{1d} and \eqref{i1} (see, \cite{hadamard1964}, \cite{hasanouglu2017introduction}).
		\end{remark}	
Finally, we prove the main result of this paper, showing the existence of solutions to the inverse coefficient problem \eqref{1a}-\eqref{1d} and \eqref{i1}.
\begin{theorem}
			Suppose $u_T \in L^2(\Omega)$ and the assumptions of Theorem \ref{t1} hold. Then there exists an admissible coefficient $ \alpha^{\ast}\in \mathcal{M}$ that minimizes the regularized Tikhonov functional $J_{\lambda}(\alpha)$ defined by \eqref{l12}.   
		\end{theorem}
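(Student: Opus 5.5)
We use the direct method of the calculus of variations on $\mathcal{M}$. Since $J_\lambda\ge 0$, the infimum $J_\ast:=\inf_{\alpha\in\mathcal{M}}J_\lambda(\alpha)$ is finite. Take a minimizing sequence $\{\alpha_m\}\subset\mathcal{M}$ with $J_\lambda(\alpha_m)\to J_\ast$. Because $\|\alpha_m\|_{H^1(\Omega)}\le\kappa$, reflexivity of $H^1(\Omega)$ and Rellich's theorem give a subsequence (not relabeled) and some $\alpha^\ast\in H^1(\Omega)$ with
\[
\alpha_m\rightharpoonup\alpha^\ast \text{ weakly in } H^1(\Omega), \qquad \alpha_m\to\alpha^\ast \text{ strongly in } L^2(\Omega).
\]
The set $\mathcal{M}$ is a closed ball, so it is convex and closed, hence weakly closed; therefore $\alpha^\ast\in\mathcal{M}$. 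Weak lower semicontinuity of the norm gives $\|\nabla\alpha^\ast\|^2_{L^2(\Omega)}\le\liminf_m\|\nabla\alpha_m\|^2_{L^2(\Omega)}$. This is where the $H^1$-seminorm regularizer is needed rather than an $L^2$ one.

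The main step is to show $\Phi(\alpha_m)\to\Phi(\alpha^\ast)$ in $L^2(\Omega)$. The Lipschitz estimate in the second Proposition requires $H^2$ control of $\delta\alpha$, which we do not have on $\mathcal{M}$, so we instead pass to the limit in the weak formulation.

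\textbf{Uniform bounds.} Write $(u_m,\theta_m)$ for the solution with coefficient $\alpha_m$. The energy identity behind Theorem \ref{t1} cancels the coupling terms exactly, so the bounds \eqref{5.4}, \eqref{ke}, \eqref{utt} hold uniformly in $m$. The only place $\alpha$ enters those bounds is through $\alpha_1$ in the estimates for $u_{tt}$ and $\theta_t$. There we will need a uniform bound on $\alpha_m$ in a space acting on $\nabla\theta$ and $\nabla u_t$. We will assume, as Theorem \ref{t1} tacitly does, that the members of $\mathcal{M}$ satisfy Assumption \ref{a2} with a common bound $\alpha_1$, so $\|\alpha_m\|_{L^\infty(\Omega)}\le\alpha_1$.

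\textbf{Compactness and passage to the limit.} By the Aubin--Lions lemma we extract limits $u$ and $\theta$ with
\[
u_m\to u \text{ in } C([0,T];L^2(\Omega)), \qquad \theta_m\rightharpoonup\theta \text{ in } L^2(0,T;H_0^1(\Omega)), \qquad \nabla u_{m,t}\rightharpoonup\nabla u_t \text{ weakly-}\ast.
\]
The delicate terms are the products $(\alpha_m\nabla\theta_m,\nabla v)$ and $(\alpha_m\nabla u_{m,t},\nabla w)$, which involve two weakly converging factors. We handle them by taking smooth test functions $v,w$ and moving $\alpha_m$ onto the test function: $\alpha_m\nabla v\to\alpha^\ast\nabla v$ strongly in $L^2(\Omega_T)$. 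This follows from the strong $L^2$ convergence of $\alpha_m$ together with the uniform $L^\infty$ bound, via dominated convergence along a further a.e.-convergent subsequence. Strong-times-weak convergence then passes each product to its limit. Density of smooth test functions concludes, and $(u,\theta)$ is a weak solution with coefficient $\alpha^\ast$. By the uniqueness part of Theorem \ref{t1}, $(u,\theta)$ is the solution for $\alpha^\ast$, and the whole sequence converges.

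\textbf{Conclusion.} It remains to identify $u(T)$ with the limit of $u_m(T)$. The bounds show $u_m$ is bounded in $W^{1,\infty}(0,T;H_0^1(\Omega))$, so $u_m\to u$ in $C([0,T];L^2(\Omega))$. Hence $u_m(T)\to u(T)=\Phi(\alpha^\ast)$ in $L^2(\Omega)$; this is consistent with the compactness Proposition. Combining with the lower semicontinuity of the regularizer,
\[
J_\lambda(\alpha^\ast)\le\liminf_{m\to\infty}J_\lambda(\alpha_m)=J_\ast,
\]
so $\alpha^\ast$ is a minimizer.

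We expect the main obstacle to be the nonlinear coupling limit in the passage to the limit. If a uniform $L^\infty$ bound on the $\alpha_m$ is not available from $\mathcal{M}$, we would first try the two-dimensional embedding $H^1\hookrightarrow L^p$ for every $p<\infty$. Combined with higher integrability of $\nabla\theta_m$, this would still allow the products $(\alpha_m\nabla\theta_m,\nabla v)$ to converge for smooth $v$.
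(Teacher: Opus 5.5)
Your proposal is correct. It follows the same direct-method skeleton as the paper: a minimizing sequence, weak $H^1$ compactness of $\mathcal{M}$, the $\alpha$-independent energy bounds such as \eqref{5.4}, passage to the limit in the weak form, and lower semicontinuity. The two key technical steps, however, are done differently.

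\textbf{The coupling terms.} The paper splits $\alpha_k=\alpha^{\ast}+(\alpha_k-\alpha^{\ast})$ and kills the remainder using H\"older and the two-dimensional Ladyzhenskaya inequality $\|\alpha_k-\alpha^{\ast}\|_{L^4}\le C\|\alpha_k-\alpha^{\ast}\|_{L^2}^{1/2}\|\nabla(\alpha_k-\alpha^{\ast})\|_{L^2}^{1/2}$, with $\nabla v\in L^4$ for $v\in H_0^2(\Omega)$. For the term containing $\nabla u_k'$, it first integrates by parts in time, so that only $\nabla u_k\in L^2(0,T;L^4(\Omega))$, $\nabla w'$ and an initial term with $\nabla u_0$ remain. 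Your fallback at the end is essentially this route, except that the paper places the extra integrability on $\nabla v$ and $\nabla u_k$ rather than on $\nabla\theta_m$.

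You instead use the uniform bound $\|\alpha_m\|_{L^\infty}\le\alpha_1$ together with strong $L^2$ convergence to get $\alpha_m\nabla v\to\alpha^{\ast}\nabla v$ strongly, then argue strong-times-weak. This treats both coupling terms symmetrically, needs no time integration by parts, and does not depend on the dimension. The paper's limit step, by contrast, uses only the $H^1$ bound defining $\mathcal{M}$ but is tied to $n=2$.

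Your reading of the hypothesis is the right one. Without an $L^\infty$ bound, the term $(\alpha\nabla u_t,\nabla w)$ in \eqref{8.111b} is not even defined for all $w\in H_0^1(\Omega)$, since $\nabla u_t$ is only in $L^2$. You should add the one-line check that $|\alpha^{\ast}|\le\alpha_1$ a.e., which follows from an a.e.-convergent subsequence. You need it because you apply Theorem \ref{t1} and its uniqueness at $\alpha^{\ast}$, and closedness of the $H^1$-ball alone does not give it.

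\textbf{The final step.} The paper asserts $u(T;\alpha_k)\rightharpoonup u(T;\alpha^{\ast})$ weakly in $H_0^2(\Omega)$ from \eqref{7.13} and then uses weak lower semicontinuity of the misfit. It does not really justify that the weak limit of the traces at $t=T$ equals the trace of the limit solution. Your Arzel\`a--Ascoli argument from the $W^{1,\infty}(0,T;H_0^1(\Omega))$ bound settles this. It gives strong convergence $\Phi(\alpha_m)\to\Phi(\alpha^{\ast})$ in $L^2(\Omega)$, so the misfit term is in fact continuous along the sequence, not merely lower semicontinuous.
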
 
		\begin{proof}
			Since $\lambda$ is positive, from the definition of the regularized functional \eqref{l12}, the set $\big\{J_{\lambda}(\alpha) \ :\  \alpha \in \mathcal{M} \big\}$ is bounded below by zero. Then, there exists a minimizing sequence $ \{ \alpha_k\}\in \mathcal{M}$ such that
			\begin{eqnarray}\label{mnbvc}
				\lim_{k\mapsto \infty } J_{\lambda}(\alpha_k)=\inf_{\alpha \in \mathcal{M}} J_{\lambda}(\alpha).
			\end{eqnarray}
			Moreover, from the definition of the set $\mathcal{M}$, we have the uniform bound $\Vert \alpha_k \Vert_{H^1(\Omega)}\leq \kappa,$ for all $ k$. Then there exists a bounded subsequence which is again denoted by $ \{ \alpha_k\},$  such that $\alpha_k \rightharpoonup \alpha^{\ast}$ weakly in $ H^1(\Omega).$ Since $ \mathcal{M}$ is a closed convex subset of the Hilbert Space $H^1(\Omega)$, therefore, the weak limit $ \alpha^{\ast} \in \mathcal{M},$ and thus $\alpha^{\ast}$ is an admissible coefficient. 
			
			The estimate \eqref{5.4} indicates that the sequence $ \left(u(x,t;\alpha_k), \theta(x,t;\alpha_k)\right)$ denoted by $ \left(u(\alpha_k), \theta(\alpha_k)\right)$ are bounded in $ L^2(0,T;H_0^2(\Omega))\times L^2(0,T;H_0^1(\Omega))$. Consequently, there exist subsequences $\{u_k\}$ of $u(\alpha_k)$ and $ \{\theta_k\}$ of $ \theta(\alpha_k)$ such that $ u_k \rightharpoonup u^{\ast}$ in $ L^2(0,T;H_0^2(\Omega))$ and $\theta_k \rightharpoonup \theta^{\ast}$ in $L^2(0,T;H_0^1(\Omega)).$ To show that $ u^{\ast}(x,t)=u(x,t; \alpha^\ast)$ and $ \theta^*(x,t)=\theta(x,t;\alpha^\ast),$ we choose test functions $ v \in C^2([0,T];H_0^2(\Omega))$ and $ w\in C^1([0,T];H_0^1(\Omega))$ with $ v(T)=0,$ $ v'(T)=0,$ $ w(T)=0$ in the weak form \eqref{8.111a}-\eqref{8.111b}. Subsequently applying integration by parts, we arrive at
			\iffalse             
            \begin{eqnarray}
				\lefteqn{\int_0^T \int_{\Omega} u_k(t) v^{\prime \prime}(t)dxdt+ \int_0^T\int_{\Omega} r\nabla u_{k}(t)\cdot\nabla v^{\prime \prime}(t) dx dt + \int_0^T \int_{\Omega} \Delta u_k(t)\Delta v(t) dx dt}\nonumber \\ &&-\int_0^T \int_{\Omega} \alpha_{k}(x) \big( \nabla \theta_k(t)\cdot \nabla v(t) \big) dx dt= \int_0^T  \int_{\Omega} F(t) v(t) dx dt- \int_{\Omega} u_k(0)v'(0) dx \nonumber \\ && + \int_{\Omega} u_k'(0) v(0)dx +\int_{\Omega} r \Delta u_k(0)v'(0) dx - \int_{\Omega} r \Delta u_k'(0) v(0) dx. \label{nbv}
			\end{eqnarray}     
			and
			\begin{eqnarray}
				\lefteqn{\hspace{-1.5in}-\int_0^T \int_{\Omega} \theta_k(t)w'(t) dx dt + \int_0^T \int_{\Omega} \nabla \theta_k(t).\nabla w(t) dx dt+ \int_0^T \int_{\Omega} \alpha_k(x) \big( \nabla u_k'(t)  \cdot \nabla w(t) \big) dx dt} \nonumber \\ &&= \int_0^T \int_{\Omega} G(t)w(t) dx dt + \int_{\Omega}\theta_k(0) w(0)dx.\label{nbva}
			\end{eqnarray} 
			Now rearranging the terms of \eqref{nbv} and \eqref{nbva}, we get 
            \fi
			\begin{eqnarray}
				\lefteqn{\int_0^T \int_{\Omega} u_k(t) v^{\prime \prime}(t)dxdt+ r\int_0^T\int_{\Omega} \nabla u_{k}(t)\cdot \nabla v^{\prime \prime}(t) dx dt + \int_0^T \int_{\Omega} \Delta u_k(t)\Delta v(t) dx dt}\nonumber \\ && - \int_0^T \int_{\Omega} \alpha^{\ast}(x) \nabla \theta_{k}(t) \cdot \nabla v(t) dx dt -\int_0^T \int_{\Omega} \left(\alpha_{k}(x)-\alpha^{\ast}(x)\right) \nabla \theta_k(t)\cdot\nabla v(t) dx dt \nonumber \\&& = \int_0^T F(t) v(t) dx dt- \int_{\Omega} u_k(0)v'(0) dx  + \int_{\Omega} u_k'(0) v(0)dx \nonumber \\&& \qquad+\int_{\Omega} r\Delta u_k(0)v'(0) dx  - \int_{\Omega} r \Delta u_k'(0) v(0) dx,\label{bvc}
			\end{eqnarray}
			and
			\begin{eqnarray}
				\lefteqn{-\int_0^T \int_{\Omega} \theta_k(t)w'(t) dx dt + \int_0^T \int_{\Omega} \nabla \theta_k(t)\cdot\nabla w(t) dx dt } \nonumber \\ && +\int_0^T \int_{\Omega} \alpha^{\ast}(x) \big( \nabla u_k'(t)\cdot \nabla w(t)\big)  dx dt+ \int_0^T \int_{\Omega} \left(\alpha_k(x)-\alpha^{\ast}(x)\right) \nabla u_k'(t) \cdot  \nabla w(t) dx dt \nonumber \\ &&= \int_0^T \int_{\Omega} G(t)w(t) dx dt + \int_{\Omega}\theta_k(0) w(0)dx.\label{bvc1}
			\end{eqnarray}
			We only show the convergences  of integrals involving $\{\alpha_k\}.$ First, we consider the following integral $\int_0^T \int_{\Omega} \left(\alpha_{k}(x)-\alpha^{\ast}(x)\right) \nabla \theta_k(t)\cdot\nabla v(t) dx dt$ appearing in \eqref{bvc}, apply H\"older's inequality and the continuous embedding $H^1(\Omega)\hookrightarrow L^4(\Omega)$ to obtain
			\begin{eqnarray} \label{5.41}
				\lefteqn{\hspace{-.5in}\int_0^T \int_{\Omega} \left(\alpha_{k}(x)-\alpha^{\ast}(x)\right) \nabla \theta_k(t)\cdot\nabla v(t) dx dt \leq  \Vert \alpha_k-\alpha^{\ast}\Vert_{L^4(\Omega)}  \Vert \nabla v \Vert_{L^2(0,T;L^4(\Omega))} \Vert \nabla \theta_k \Vert_{L^2(0,T;L^2(\Omega))}}\nonumber \\ 
                &\leq & C \Vert \alpha_k-\alpha^{\ast}\Vert_{L^2(\Omega)}^{1/2} \Vert\nabla\alpha_k-\nabla\alpha^{\ast}\Vert_{L^2(\Omega)}^{1/2}\Vert  v \Vert_{L^2(0,T;H_0^2(\Omega))} \Vert \nabla \theta_k \Vert_{L^2(0,T;L^2(\Omega))},
			\end{eqnarray}
	where we used the 2D Ladyzhenskaya's inequality $\|u\|_{L^4(\Omega)} \leq C\|u\|_{L^2(\Omega)}^{1/2} \|\nabla u\|_{L^2(\Omega)}^{1/2}.$
			Since $ \alpha_k\rightharpoonup \alpha^{\ast} $  weakly in $H^1(\Omega)$, we can find a subsequence (again represented as $\{\alpha_k\}$), such that $\Vert \alpha_k-\alpha^{\ast}\Vert_{L^2(\Omega)} \rightarrow 0$. Combining this with inequality \eqref{5.41} implies that $\int_0^T \int_{\Omega} \left(\alpha_{k}(x)-\alpha^{\ast}(x)\right) \nabla \theta_k(t)\cdot\nabla v(t) dx dt \rightarrow 0$ as $ k\rightarrow \infty.$ 
			
            Next doing time integration by parts for the following integral occuring in \eqref{bvc1}, we get 
            \begin{eqnarray}
               \lefteqn{\int_0^T \int_{\Omega} \big(\alpha_k(x)-\alpha^{\ast}(x)\big) \nabla u_k(t)\cdot  \nabla w'(t) dx dt}\nonumber\\
               &&= -\int_0^T \int_{\Omega} \left(\alpha_k(x)-\alpha^{\ast}(x)\right) \nabla u_k(t)\cdot  \nabla w^{\prime}(t) dx dt +\int_{\Omega} \left(\alpha_k(x)-\alpha^{\ast}(x)\right) \nabla u_0(x)\cdot  \nabla w(x,0) dx\nonumber\\
               &&:= I_1+I_2.  
            \end{eqnarray}
   By following the arguments similar to \eqref{5.41}, we get        
			\begin{eqnarray}\label{234}
			I_1&\leq &\bigg(\int_0^T \int_{\Omega}\left(\alpha_{k}(x)-\alpha^{\ast}(x)\right)^2|\nabla u_k(t)|^2 dx dt\bigg)^{\frac{1}{2}} \bigg(\int_{0}^T\int_{\Omega} |\nabla w'(t)|^2 dxdt\bigg)^{\frac{1}{2}} \nonumber \\ & \leq&\Vert \alpha_k-\alpha^{\ast}\Vert_{L^2(\Omega)}^{1/2} \Vert\nabla\alpha_k-\nabla\alpha^{\ast}\Vert_{L^2(\Omega)}^{1/2}	\Vert u_k\Vert_{L^2(0,T;H_0^2(\Omega))} \Vert \nabla w'\Vert_{L^2(0,T;L^2(\Omega))} 
                \end{eqnarray}
   and             
                \begin{eqnarray}\label{2341}
				\lefteqn{\int_{\Omega} \left(\alpha_k(x)-\alpha^{\ast}(x)\right) \nabla u_0(x)\cdot  \nabla w(x,0) dx }\nonumber \\ &\leq&
				\bigg(\int_{\Omega}\left(\alpha_{k}(x)-\alpha^{\ast}(x)\right)^2|\nabla u_0(x)|^2 dx \bigg)^{\frac{1}{2}} \bigg(\int_{\Omega} |\nabla w(x,0)|^2 dx\bigg)^{\frac{1}{2}} \nonumber \\ & \leq&\Vert \alpha_k-\alpha^{\ast}\Vert_{L^2(\Omega)}^{1/2} \Vert\nabla\alpha_k-\nabla\alpha^{\ast}\Vert_{L^2(\Omega)}^{1/2}	\Vert u_0\Vert_{H_0^2(\Omega)} \Vert \nabla w(0)\Vert_{L^2(\Omega)} 
                \end{eqnarray}
			The inequalities \eqref{234}, \eqref{2341} and $\alpha_k\to  \alpha^{\ast} $ in $ L^2(\Omega)$  implies that the following convergence holds:  $\int_0^T \int_{\Omega} \left(\alpha_k(x)-\alpha^{\ast}(x)\right) \nabla u_k'(t)\cdot\nabla w(t) dx dt \rightarrow 0$ as $ k \rightarrow \infty.$
			
			By taking the limit $ k\rightarrow \infty $ in \eqref{bvc}, \eqref{bvc1} and doing integration by parts, it can be verified that $ u^{\ast}(x,t)=u(x,t,\alpha^{\ast})$ and $\theta^{\ast}(x,t)=\theta(x,t,\alpha^{\ast})$ form a weak solution of the system \eqref{1a}-\eqref{1d}. 
            
			Next we want to show that $ \inf_{\alpha \in \mathcal{M}} J_{\lambda}(\alpha)=J_{\lambda}(\alpha^{\ast}),$ that is, $ \alpha^{\ast}$ is the optimal solution for the functional $ J_{\lambda}(\alpha).$ It is clear that 
			\begin{eqnarray*}
				\int_{\Omega} \big|\big (u(x,T;\alpha_k) -u_{T}(x)\big)- \big ( u(x,T;\alpha^{\ast})-u_{T}(x)\big)\big|^2 dx\geq 0,
			\end{eqnarray*}
			whence
			\begin{eqnarray*}
				\lefteqn{\int_{\Omega} \left( u(x,T;\alpha_k)-u_{T}(x)\right)^2 dx  \geq - \int_{\Omega} \left( u(x,T;\alpha^{\ast})-u_T(x)\right)^2 dx}\nonumber \\ &+&2 \int_{\Omega} \left( u(x,T;\alpha_k)-u_T(x)\right) \left( u(x,T;\alpha^{\ast})-u_T(x)\right) dx.
			\end{eqnarray*}
			By \eqref{7.13}, $ u(x,T;\alpha_k)\rightharpoonup u(x,T;\alpha^{\ast})$ in $ H_0^2(\Omega),$ and hence the above inequality leads to
			\begin{eqnarray}
				\liminf_{k\rightarrow \infty}\int_{\Omega} \left( u(x,T;\alpha_k)-u_{T}(x)\right)^2 dx  &\geq& \int_{\Omega} \left( u(x,T;\alpha^{\ast})-u_T(x)\right)^2.\label{ier}
			\end{eqnarray}
			The weak convergence $ \alpha_k \rightharpoonup \alpha^{\ast}$ in $ H^1(\Omega)$ and the inequality \eqref{ier} give
			\begin{eqnarray}
				\liminf_{k\rightarrow \infty} J_{\lambda}(\alpha_k)&=&\liminf_{k\rightarrow \infty}\int_{\Omega} \left( u(x,T;\alpha_k)-u_T(x) \right) ^2 dx + \liminf_{k\rightarrow\infty} \frac{\lambda}{2} \int_{\Omega}| \nabla \alpha_k |^2 dx \nonumber \\ &\geq&   \int_{\Omega} \left( u(x,T;\alpha^{\ast})-u_T(x)\right)^2 dx+ \frac{\lambda}{2} \int_{\Omega}
				| \nabla \alpha^{\ast} |^2 dx  = J_{\lambda}(\alpha^{\ast}).\label{bnmv}
			\end{eqnarray} 
			Employing \eqref{mnbvc} and \eqref{bnmv}, we deduce
			\begin{eqnarray*}
				\inf_{\alpha \in \mathcal{M}} J_{\lambda}(\alpha) \leq J_{\lambda}(\alpha^{\ast})\leq \liminf_{k\rightarrow \infty} J_{\lambda}(\alpha_k)= \inf_{\alpha \in \mathcal{M}} J_{\lambda}(\alpha).
			\end{eqnarray*}
			This shows that $ \alpha^{\ast}$ is the minimizer for the functional $ J_{\lambda}(\alpha).$ This completes the proof.
		\end{proof}
		\section{Conclusion}
        In this paper, we studied a coefficient inverse problem of reconstructing the thermal expansion coefficient in the thermoelastic plate equation from final-time displacement measurements. We first established the well-posedness of the direct problem and, under additional regularity on the unknown coefficient, derived the regularity of its solution required for the subsequent analysis. To account for the inherent noise in the measured data, the inverse problem was formulated as the minimization of a Tikhonov regularized cost functional. Using a priori bounds derived from the direct problem, we proved the compactness of the input–output operator, confirming the ill-posedness of the inverse problem in the classical sense. Within this variational framework, we established the existence of a minimizer of the regularized functional via the direct method of calculus of variations, thereby providing a rigorous solution to the inverse problem.  
        
      Owing to the nonlinear nature of the problem, the analysis of the Fréchet derivative of the Tikhonov functional via the adjoint method is considerably more technical. This analysis, together with the design and implementation of a gradient-based numerical reconstruction algorithm, will be addressed in a forthcoming paper. As inverse coefficient problems for thermoelastic plate models have received limited attention in the optimization literature, we expect the present framework to serve as a starting point for further theoretical and numerical developments in this direction.

		%	
		%	
		%	%The acknowledgments section should not be numbered.
		%%	\section*{Acknowledgments} \\
		%	
		%	%%%%%%%%%%%%%%%%%%%%%%%%%%%%%%%%%%%%%%%%%%%%%%%%%%%%%%
		%	%          7. REFERENCES SECTION
		%	%%%%%%%%%%%%%%%%%%%%%%%%%%%%%%%%%%%%%%%%%%%%%%%%%%%%%%
		%	
		%	%       READ THIS SECTION CAREFULLY
		%	
		%	% Each of the references below MUST be cited in your article above. Do not include references that are not cited in your article.
		%	
		%	% Follow the examples below carefully. We strongly suggest that you copy and paste your reference information directly into our examples.
		%	
		%	% List all references in alphabetical order according to the first author’s last name.
		%	
		%	% Verify each URL works correctly and can be accessed properly. Your URL links should be to reputable websites. The command line for a website link begins with: \url{ }
		%	
		%	% Do not add MR or DOI numbers to your references. AIMS production staff will add this information.
		%	
		%	% Using BibTex is not recommended but can be handled.
		%	
		%	
		%	
		%	\medskip
		%	% The information below will be filled in by AIMS production staff.
		%	Received xxxx 20xx; revised xxxx 20xx; early access xxxx 20xx.
		%	\medskip
		%	

\begin{thebibliography}{99}
			\bibitem{andrew2018}
			N. Andrew, Norris,
			\newblock{Dynamics of thermoelastic thin plates: a comparison of four theories, \textit{Journal of Thermal Stresses,} \textbf{29}(2006), 169-195.}
			\bibitem{anjuna:2021}
			D. Anjuna, K. Sakthivel and A. Hasanov,
			\newblock{Determination of a spatial load in a damped Kirchhoff-Love plate equation from final time measured
				data,}
			\newblock{\textit{Inverse Problems,} \textbf{38}(2022),} 15009 (35pp).
			
			\bibitem{anjuna}
			D. Anjuna, A. Hasanov, and K. Sakthivel,
			\newblock{Simultaneous identification of spatial load and external heat source in a thermoelastic plate from final time measured displacement, \textit{Inverse Problems and Imaging,} \textbf{18}(2024), 751-755.}
			\bibitem {OAA} O. Baysal, A. Hasanov and A. Kawano, Reconstruction of the spatial component in the source term of a vibrating elastic plate from boundary observation, \textit{Applied Mathematical Modelling}, \textbf{103} (2022), 409-420.
			\bibitem{bellassoued2010}
			M. Bellassoued and M. Yamamoto,
			\newblock{Carleman estimates and an inverse heat source problem for the thermoelasticity system, \textit{Inverse problems}, \textbf{27}(2010), 015006.}
			\bibitem{bhullar}
			S. K. Bhullar and J. L. Wegner, 
			\newblock{Some transient  thermoelastic plate problems,} \textit{ Journal of Thermal Stresses,} \textbf{32} (2009), 768-790. 
			\bibitem{sdbt2014}
			S. D'haeyer, B. T. Johansson and M. Soldi$\check{c}$ka,
			\newblock{Reconstruction of a spacewise-dependent heat source in a time-dependent heat diffusion process, \textit{IMA Journal of Applied Mathematics}, \textbf{79}(2014), 33-53.}
			\bibitem{meller2001}
			M. Eller, I. Laseika, and R. Triggiani
			\newblock{ Exact or approximate controllability of thermoelastic plate with variable thermal coefficients, \textit{Discrete and continuous dynamical systems,} \textbf{7}(2001,) 283-302.  }
			\bibitem{lcevanspartial2010}
			L. C. Evans,
			\newblock{\textit{Partial Differential Equations,}}
			\newblock{American Mathematical Soc, 2010.}
			\bibitem{gilbarg1998elptic}
			D. Gilbarg and N. S. Trudinger,
			\newblock{\textit{Elliptic Partial Differential Equations of Second Order, }}
			\newblock{Springer, 1998.}
			\bibitem{hadamard1964}
			L. Hadamard,
			\newblock{\textit{La th\'{e}orie des \'{e}quations aux d\'{e}riv\'{e}es partielles,}}
			\newblock{\'{E}ditions scientifiques, 1964.}
			
			\bibitem{PH} P. Hartman, 
			\newblock{\textit{Ordinary Differential Equations,}}
			\newblock{John Wiley \& Sons, Inc., New York-London-Sydney, 1964.}
			
			\bibitem{hasanov2007}
			A. Hasanov,
			\newblock{Simultaneous determination of source terms in a linear parabolic problem from the final overdetermination: Weak solution approach, \textit{Journal of Mathematical Analysis and Applications, } \textbf{330}(2007), 766-779.}
			\bibitem{hasanov2009identification}
			A. Hasanov,
			\newblock{Identification of an unknown source term in a vibrating cantilevered
				beam from final overdetermination,}
			\newblock{\textit{Inverse Problems,} \textbf{25}(2009), 115015}
			\bibitem{hasanouglu2019}
			A. Hasanov and O. Baysal, \newblock{Identification of a temporal load in a cantilever beam from
				measured boundary bending moment,} \newblock{\textit{Inverse Problems,} \textbf{35} (2019), 105005.}
			\bibitem{hasanouglu2017introduction}
			A. Hasanov and V.G. Romanov,
			\newblock{\textit{Introduction to Inverse Problems for Differential Equations} (2nd ed.),}
			\newblock{Springer, 2021.} 

           
			\bibitem{JElagnese}
			J.E. Lagnese,
			\newblock{\textit{Boundary Stabilization of Thin Plates,}}
			\newblock{SIAM, Philadelphia, PA, 1989.}
			\bibitem{J.E. Lagnese}
			J.E. Lagnese and J.L Lions,
			\newblock{\textit{Modelling
					Analysis and Control
					of Thin Plates,}}
			\newblock{Masson, Paris, 1988.}
			 \bibitem{NKM}N.K. Murugesan,  K. Sakthivel,  A. Hasanov and N. Barani Balan,  Inverse coefficient problem for cascade system of fourth and second order partial differential equations,  \emph{Applied Mathematics and Optimization},  89 (2024),  1-32.
			
			\bibitem{sakthivel2011}
			K. Sakthivel, S. Gnanavel, N. Barani Balan, and K. Balachandran,
			\newblock{Inverse problem for the reaction diffusion system by optimization method, \textit{Applied Mathematical Modelling}, \textbf{38}(2011), 571-579.} 

            \bibitem {TLK}  T. Sharma, L. Beilina and K. Sakthivel, Lagrangian approach for the reconstruction of source function in a parabolic equation using partial boundary measurements, \textit{Mathematics and Computers in Simulation,}  \textbf{250} (2026), 442-463.	
	
			\bibitem{esuhir}
			E. Suhir,
			\newblock{Thermal stress modeling in microelectronics and photonic structures and the application of the probablistic approach: Review and extension, \textit{The International journal of microcircuits and electronic packaging}, \textbf{23}(2000), 215-223. }
			\bibitem{timoshenko}
			S. Timoshenko and S. Woinowsky-Krieger,
			\newblock{\textit{Theory of Plates and Shells,} McGraw-Hill, New York
				(1987).}
            \bibitem{Tikhonov}    
            A. Tikhonov and V. Arsenin, \newblock{\textit{Solutions of Ill-Posed Problems,} Geology Press, Beijing, 1979.}
			\bibitem{van2019}	
			K. Van Bockstal,
			\newblock{Identification of an unknown spatial load distribution in a vibrating beam or plate from the final state, \textit{Journal of Inverse and Ill-posed Problems,} \textbf{27}(2019), 623-642.}
			\bibitem{van2015}
			K. Van Bockstal, and M.Slodi{\v{c}}ka,
			\newblock{Recovery of a space-dependent vector source in thermoelastic systems, \textit{Inverse Problems in Science and Engineering,} \textbf{23}(2015).} 
\bibitem{KW}K. Wehrheim, \textit{Uhlenbeck Compactness}, EMS Series of Lectures in Mathematics, 2004.
			\bibitem{wu1}
			B. Wu, and J. Liu,
			\newblock{Conditional stability and uniqueness for determining
				two coefficients in a hyperbolic–parabolic system, \textit{Inverse Problems,} \textbf{27}(2011), 075013. }		
			\bibitem{bwu}
			B. Wu, and J. Liu
			\newblock{Determination of an unknown source for a thermoelastic system with a memory effect, \textit{Inverse Problems,} \textbf{28}(2012),095012.}
			\bibitem{xxing}
			X. Xing, and Z. Shen
			\newblock{Thermoelastic structural dynamics analysis of a satellite with composite thin-walled boom, \textit{Acta Mechanica }, \textbf{3}(2023), 1250-1273. }
			
		\end{thebibliography}
	\end{document}